\newtheorem{theorem}{Theorem}[section]
\newaliascnt{lemma}{theorem}
\newtheorem{lemma}[lemma]{Lemma}
\newaliascnt{conjecture}{theorem}
\newtheorem{conjecture}[conjecture]{Conjecture}
\newaliascnt{proposition}{theorem}
\newaliascnt{corollary}{theorem}
\newtheorem{corollary}[corollary]{Corollary}
\newaliascnt{problem}{theorem}
\theoremstyle{definition}
\newaliascnt{definition}{theorem}
\newtheorem{definition}[definition]{Definition}
\newaliascnt{example}{theorem}
\newtheorem{example}[example]{Example}
\theoremstyle{remark}
\newaliascnt{remark}{theorem}
\newtheorem{remark}[remark]{Remark}
\newaliascnt{remarks}{theorem}
\numberwithin{equation}{subsection}
\def\lra{\longrightarrow}
\def\({$($}
\def\){$)$}
\newcommand{\adele}{{{\mathbb{A}}_f}}
\newcommand{\Qbb}{\mathbb{Q}}
\newcommand{\Rbb}{\mathbb{R}}
\newcommand{\Zbb}{\mathbb{Z}}
\newcommand{\Mcal}{\mathcal{M}}
\newcommand{\Acal}{\mathcal{A}}
\newcommand{\Cbb}{\mathbb{C}}
\newcommand{\la}{\leftarrow}
\newcommand{\ra}{\rightarrow}
\newcommand{\mono}{\hookrightarrow}
\newcommand{\Tcal}{\mathcal{T}}
\newcommand{\GSp}{\mathrm{GSp}}
\newcommand{\Sp}{\mathrm{Sp}}
\newcommand{\bsh}{\backslash}
\newcommand{\isom}{\simeq}
\newcommand{\Gbb}{\mathbb{G}}
\newcommand{\mrm}{\mathrm{m}}
\newcommand{\Gbf}{\mathbf{G}}
\newcommand{\Hbf}{\mathbf{H}}
\newcommand{\Sbb}{\mathbb{S}}
\newcommand{\Res}{\mathrm{Res}}
\newcommand{\ad}{\mathrm{ad}}
\newcommand{\Ad}{\mathrm{Ad}}
\newcommand{\GL}{\mathrm{GL}}
\newcommand{\gfrak}{\mathfrak{g}}
\newcommand{\Lie}{\mathrm{Lie}}
\newcommand{\SL}{\mathrm{SL}}
\newcommand{\der}{\mathrm{der}}
\newcommand{\inv}{{-1}}
\newcommand{\Ker}{\mathrm{Ker}}
\newcommand{\Acalbar}{\overline{\Acal}}
\newcommand{\cosg}{compact open subgroup\ }
\newcommand{\Mbar}{{\overline{M}}}
\newcommand{\Tbf}{{\mathbf{T}}}
\newcommand{\Hom}{\mathrm{Hom}}
\newcommand{\Gr}{\mathrm{Gr}}
\newcommand{\Hcal}{\mathcal{H}}
\newcommand{\sing}{\mathrm{sing}}
\newcommand{\dec}{\mathrm{dec}}
\newcommand{\THcal}{\mathcal{TH}}
\newcommand{\BB}{\mathrm{BB}}
\title{A note on Shimura subvarieties in the hyperelliptic Torelli locus}
\author{Ke CHEN}
\address{School of mathematics, University of Science and Technology of China, Hefei, China, 230026}
\email{kechen@ustc.edu.cn}
\author{Xin Lu}
\address{Institut f\"ur Mathematik, Universit\"at Mainz, Mainz, Germany, 55099}
\email{luxin001@uni-mainz.de}
\author{Kang Zuo}
\address{Institut f\"ur Mathematik, Universit\"at Mainz, Mainz, Germany, 55099}
\email{zuok@uni-mainz.de}
\subjclass[2010]{Primary 11G15, 14G35, 14H40; Secondary 14D07, 14K22}
\begin{document}

\begin{abstract}

In this note we prove the non-existence of Shimura subvarieties of positive dimension contained generically in the hyperelliptic Torelli locus for curves of genus $g>7$, which is an analogue of Oort's conjecture in the hyperelliptic case.
\end{abstract}

\maketitle

\tableofcontents

\section{Introduction}\label{sec-introduction}
Let $\Mcal_g$ (resp. $\Acal_g$) be the fine moduli scheme of smooth projective curves of genus $g$ (resp. of principally polarized abelian varieties of dimension $g$) with  level-$N$ structures, $N$ being a fixed integer at least 3 so that the corresponding moduli problems are representable. We have the Torelli map $j^\circ:\Mcal_g\ra\Acal_g$, whose image $\Tcal_g^\circ$ is called the open Torelli locus. The closure $\Tcal_g$ of $\Tcal_g^\circ$ is called the Torelli locus, and $\Tcal_g^\circ$ is known to be an open subscheme of $\Tcal_g$. Note that $\Acal_g$ is a connected Shimura variety, in which we can talk about Shimura subvarieties (cf. \autoref{sec-preliminaries}). A Shimura subvariety $M\subset \Acal_g$ is said to be contained generically in $\Tcal_g$ if $M\subset\Tcal_g$ and $M\cap\Tcal_g^\circ\neq\emptyset$. It was conjectured that:

\begin{conjecture}[Oort]\label{Oort conjecture}
For $g$ sufficiently large, the Torelli locus $\Tcal_g$ contains NO Shimura subvarieties of positive dimension generically.
\end{conjecture}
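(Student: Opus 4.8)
The plan is to run the program that attaches a variation of Hodge structure to the putative Shimura subvariety and confronts its Higgs-theoretic maximality with the Arakelov-type slope inequalities obeyed by families of curves. Suppose $M\subset\Acal_g$ were a Shimura subvariety of positive dimension contained generically in $\Tcal_g$. Through a generic point of $M$ passes a Shimura curve whose generic point still lies in $\Tcal_g^\circ$, so one reduces at once to $\dim M=1$, say $M=B$ a Shimura curve. Since the dense open $B\cap\Tcal_g^\circ$ lies in $\Tcal_g^\circ=j^\circ(\Mcal_g)$ it lifts to $\Mcal_g$ and carries a family of smooth genus-$g$ curves; after a finite base change and compactifying one obtains a relatively minimal semistable fibration $\fbar\colon\Sbar\to\Bbar$, smooth over $B^\circ:=\Bbar\setminus\Delta$, non-isotrivial (a Shimura curve is never isotrivial and the Torelli map is injective), whose associated $\Qbb$-variation $\Vbb=R^1\fbar_*\Qbb|_{B^\circ}$ is the pullback of the universal variation on $\Acal_g$.

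The next step exploits that $B$ is Shimura. The logarithmic Higgs bundle $(E^{1,0}\oplus E^{0,1},\theta)$ of $\Vbb$, with $E^{1,0}=\fosb$ and edge map $\theta\colon E^{1,0}\to E^{0,1}\otimes\Omega^1_{\Bbar}(\log\Delta)$, then splits as the direct sum of a flat unitary sub-Higgs bundle of rank $g-\ell$ and a ``maximal Higgs'' piece of rank $\ell\ge 1$ on which $\theta$ is an isomorphism onto its image (Viehweg--Zuo, M\"oller). In particular the Arakelov equality $\deg\fosb=\tfrac{\ell}{2}\bigl(2g(\Bbar)-2+\#\Delta\bigr)$ holds and $\fosb$ contains a sub-bundle of rank $g-\ell$ and slope $0$.

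One then plays this against the geometry of $\fbar$. Relative minimality gives the Noether relation $\osb^2=12\deg\fosb-\delta$ with $0\le\delta\le(3g-3)\#\Delta$ the total number of nodes, while non-isotriviality and $g\ge 2$ give Xiao's slope inequality $\osb^2\ge\tfrac{4(g-1)}{g}\deg\fosb$ together with its refinements (Cornalba--Harris, Stoppino, Lu--Zuo), which sharpen exactly when $\fosb$ has a thick sub-bundle of small slope, as here. Feeding the Arakelov equality into the resulting chain of inequalities produces a numerical relation among $g$, $\ell$, $g(\Bbar)$ and $\#\Delta$ that severely constrains $\ell$; the surviving possibilities are then meant to be killed by a differential-geometric input, namely that the second fundamental form of the Torelli map --- governed by the Gaussian (Wahl) map $\mathrm{Sym}^2H^0(C,\omega_C)\to H^0(C,\omega_C^{\otimes 2})$ and its higher analogues --- must vanish along the tangent directions of $(j^\circ)^{-1}(M)$, which one expects to be impossible once this locus is positive-dimensional and $g$ is large.

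The main obstacle is to carry out this final exclusion effectively and uniformly in $g$: ruling out, for every sufficiently large $g$, families of curves of genus $g$ whose period map is that degenerate is the \emph{unresolved core} of the conjecture, as neither the needed non-vanishing of Gaussian maps nor the optimal sharpness of the slope inequalities is known in such generality. The program does close when extra structure is available --- when the family carries real or quaternionic multiplication (following Viehweg--Zuo, M\"oller, Lu--Zuo), or, as in the hyperelliptic situation treated in this note, when each fibre is a double cover of $\Pbb^1$ branched at $2g+2$ points and the hyperelliptic involution further rigidifies the Higgs bundle, so that the numerical bounds above become decisive already for $g>7$. (That positive-dimensional Shimura subvarieties do occur in $\Tcal_g$ for small $g$ --- e.g. the families of cyclic covers of $\Pbb^1$ studied by de Jong--Noot, Moonen, Rohde and others --- shows the hypothesis ``$g$ sufficiently large'' cannot be dropped.)
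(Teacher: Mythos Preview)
The statement you are attempting to prove is \autoref{Oort conjecture}, which the paper states as an \emph{open conjecture}; the paper does not claim, and does not contain, a proof of it. What the paper actually proves is the hyperelliptic analogue (\autoref{hyperelliptic Oort conjecture}). So there is no ``paper's own proof'' to compare against, and your proposal cannot be a correct proof of the statement simply because no such proof is currently known.

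Your proposal is in fact not a proof either, and you acknowledge this yourself: you write that carrying out the final exclusion ``effectively and uniformly in $g$'' is ``the \emph{unresolved core} of the conjecture''. What you have written is a competent survey of the Higgs-bundle / Arakelov-inequality program (Viehweg--Zuo, M\"oller, Lu--Zuo) together with an honest statement of where it stalls. That is useful expository material, but it is not a proof attempt in any meaningful sense; the genuine gap is exactly the one you identify, and nothing in the text closes it. A minor additional issue: the opening reduction ``through a generic point of $M$ passes a Shimura curve'' is asserted without justification; producing Shimura curves inside an arbitrary Shimura subvariety requires an argument on sub-data (of the type in \autoref{non-simple Shimura data} or via $\SL_2$-triples), and the resulting curve need not pass through a \emph{generic} point.

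It may also be worth noting that the paper's proof of the hyperelliptic case does \emph{not} proceed by the method you sketch. Rather than pushing the slope inequalities in higher dimension, the paper reduces to simple Shimura data, uses the codimension-$\geq 2$ property of boundary components in the Baily--Borel compactification together with the affineness of $\THcal_g^\circ$ to force the intersection with the decomposable locus $\Acal_g^\dec$ to contain a divisor, observes that this divisor is again a Shimura subvariety, and then inducts on dimension; only at the base of the induction (Shimura curves) is the Higgs/slope input from \cite{lu-zuo-14} invoked. Your outline, by contrast, jumps immediately to the curve case and tries to handle everything there.
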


We refer to the recent survey \cite{moonen-oort-13} of Moonen-Oort and the references there for the history, motivation, applications and further discussion of this conjecture. There has been much progress towards the above conjecture, see for example \cite{chen-lu-zuo,dejong-noot-91,dejong-zhang-07,grushevsky-moller-13,hain-99,lu-zuo-14,moonen-10}, etc.

Inside $\Tcal_g$ there is the hyperelliptic Torelli locus $\THcal_g$ corresponding to Jacobians of hyperelliptic curves (including the non-smooth ones) with $\THcal_g^\circ:=\THcal_g\cap\Tcal_g^\circ=j^\circ(\Hcal_g)$ open in $\THcal_g$, where $\Hcal_g\subset \Mcal_g$ is the locus of smooth hyperelliptic curves. In this paper we study the following hyperelliptic analogue of Oort's conjecture:

\begin{theorem}[hyperelliptic Oort conjecture]\label{hyperelliptic Oort conjecture}
For $g>7$, the hyperelliptic Torelli locus $\THcal_g$ does not contain any Shimura subvariety of positive dimension generically.
\end{theorem}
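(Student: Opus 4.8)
The plan is to argue by contradiction: suppose $M\subset\Acal_g$ is a Shimura subvariety of positive dimension contained generically in $\THcal_g$. The strategy is to combine (i) the general theory of Shimura subvarieties of $\Acal_g$, which forces the generic abelian variety parametrized by $M$ to have extra endomorphisms or, more usefully, forces the variation of Hodge structure over $M$ to decompose, with (ii) the Arakelov/slope inequalities available for the hyperelliptic locus, which are sharper than in the general Torelli case. Concretely, let $\olb$ be a smooth projective compactification of a smooth projective curve $B\subset M$ passing through a point of $\THcal_g^\circ$, chosen inside $M$, and let $\bar f:\ols\to\olb$ be the corresponding semistable family of (stable) hyperelliptic curves, with $\Sbar$ its hyperelliptic involution acting fiberwise. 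Pushing forward we get the Hodge bundle $\fosb$, a rank-$g$ vector bundle on $\olb$ whose associated local system underlies the pullback to $B$ of the universal VHS on $\Acal_g$.

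The key steps, in order: First I would use that $B$ lies in a Shimura subvariety to deduce that the $\Qbb$-local system $R^1\bar f_*\Qbb|_B$ is, up to isogeny, a \emph{fixed part plus a sum of uniformizing sub-VHS's}; in particular the Higgs bundle $(\fosb)\oplus R^1\bar f_*\calo_{\ols/\olb}$ with its Higgs field $\theta:\fosb\to\Omega^1_{\olb}(\log)\otimes R^1\bar f_*\calo$ decomposes compatibly, and the maximal Higgs (``uniformizing'') part saturates the Arakelov bound. Second, I would invoke the numerical consequences of the hyperelliptic condition: for a family of hyperelliptic curves the hyperelliptic involution splits $\fosb$ off from the relative canonical system in a controlled way, and the local contributions at singular fibers (counting the two types of nodes of a stable hyperelliptic curve) satisfy the strict slope inequality refining Cornalba--Harris / Xiao, of the shape $\deg\fosb \le \frac{g-1}{?}\cdots$, with boundary terms that are \emph{too small} to allow a maximal-Higgs sub-VHS of positive rank once $g>7$. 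Third, I would run the standard dichotomy on the maximal Higgs subsheaf $\call\hookrightarrow\fosb$: either $\theta|_\call$ is an isomorphism onto a line sub-bundle of $\Omega^1_{\olb}(\log)\otimes(\fosb)^\vee$-type and then $\deg\call$ is pinned by the Arakelov equality, or the iterated Higgs field degenerates, contradicting maximality. Comparing the Arakelov-forced degree of $\call$ with the hyperelliptic slope bound yields the numerical contradiction; the bound $g>7$ is exactly where the boundary/ramification contributions can no longer compensate.

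The main obstacle I expect is the \textbf{sharp control of the boundary contributions} at the non-smooth hyperelliptic curves in $\THcal_g\setminus\THcal_g^\circ$ that $\olb$ may hit. One must ensure the family $\bar f$ is (after base change and semistable reduction) sufficiently well-behaved that the local Higgs field along the log poles is genuinely nilpotent of the expected order, and that the contribution of each hyperelliptic node to $\deg\fosb$ is bounded below by the precise constant that makes the inequality strict for $g>7$; this is where the hyperelliptic geometry (the structure of the admissible double cover and its branch divisor degenerating) must be used in an essential way, rather than just the general Cornalba--Harris inequality. A secondary subtlety is passing from the statement ``$M$ is a Shimura subvariety'' to the decomposition of the VHS with the maximal-Higgs summand: one should reduce to the case where the connected algebraic monodromy group is $\Qbb$-simple and then use the classification of which Hermitian symmetric domains can embed as sub-VHS of the Siegel upper half space $\Hbb_g$ with the Hodge types occurring in a hyperelliptic family, ruling out all of them on dimension grounds once $g>7$.
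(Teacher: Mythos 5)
Your proposal takes a genuinely different route from the paper. The paper argues by (i) reducing via Hecke translation to the case $\Gbf^\der$ $\Qbb$-simple, (ii) noting that once $\dim M\geq 2$ (so $\Gbf^\der$ is not isogenous to $\SL_{2,\Qbb}$) the Baily--Borel boundary of $\Mbar$ has codimension $\geq 2$, (iii) invoking the affineness of $\THcal_g^\circ$, which forces any complete curve $C\subset\Mbar$ to meet either the boundary or $\Acal_g^\dec$, (iv) the fact that $\Acal_g^\dec$ is a finite union of Shimura subvarieties, so $\Acal_g^\dec\cap M$ produces a codimension-one Shimura subvariety of $M$, and (v) dimensional induction terminating at the Shimura-curve case, which is Lu--Zuo's Theorem E. You instead propose a direct Higgs-bundle / Arakelov-slope argument along a curve $B\subset M$ through $\THcal_g^\circ$, paired with Cornalba--Harris/Xiao-type slope inequalities for hyperelliptic fibrations.

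There is, however, a genuine gap. You take $B$ to be an \emph{arbitrary} curve in $M$ and then assert that $R^1\bar f_*\Qbb|_B$ decomposes into a unitary part plus ``uniformizing'' sub-VHS's that saturate the Arakelov bound. This is false for general $B$: the unitary-plus-ample decomposition and the maximality of the Higgs field are properties of the universal VHS over the whole Shimura subvariety $M$, not of its restriction to a curve. Restricting the ample summand to a non-totally-geodesic curve $B\subset M$ does not preserve Arakelov equality, because the Higgs field over $B$ is the composition of the Higgs field over $M$ with the projection $\Omega^1_M|_B\to\Omega^1_B$, which generically has large cokernel; the maximal-Higgs property holds only when $B$ is itself a totally geodesic (Shimura) curve in $\Acal_g$. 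To repair the argument you would have to take $B$ to be a Shimura curve inside $M$, establish that $M$ contains a Shimura curve at all, and arrange a Hecke translate of it to pass through $\THcal_g^\circ$ --- and at that point the Higgs/Arakelov computation is a re-derivation of Lu--Zuo's Theorem E, which the paper simply cites. The Baily--Borel/affineness/induction mechanism in the paper is precisely what avoids these difficulties: it manufactures the required lower-dimensional Shimura subvariety inside $\Acal_g^\dec\cap M$ from the affineness of $\THcal_g^\circ$, rather than from any slope inequality, and never needs a direct numerical estimate on $M$ itself. (The vagueness of the proposed hyperelliptic slope bound ``$\deg\fosb\le\frac{g-1}{?}\cdots$'' is a secondary issue, but even in precise form it would not close the gap above.)
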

Similar to the Torelli case, here a Shimura subvariety $M$ of $\Acal_g$ is contained generically in $\THcal_g$ if and only if $M$ is contained in $\THcal_g$ and the intersection $M\cap\THcal_g^\circ$ is non-empty. It is known that when $g$ is small, there indeed exist  Shimura subvarieties of positive dimension contained generically in the hyperelliptic Torelli locus, see for instance \cite{grushevsky-moller-13,moonen-10,lu-zuo-14}. In particular, Grushevsky and M\"oller constructed in \cite{grushevsky-moller-13} infinitely many Shimura curves contained in $\THcal_3$.

%

Assuming the Andr\'e-Oort conjecture for $\Acal_g$, we deduce from the theorem above the following finiteness result on  CM points in the open Torelli locus $\THcal_g^\circ$.
\begin{corollary}
For $g>7$, if the Andr\'e-Oort conjecture for $\Acal_g$ is true, then there exists at most finitely many smooth hyperelliptic   curves of genus $g$ (up to isomorphism) with complex multiplication.
\end{corollary}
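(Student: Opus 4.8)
The plan is to deduce this from \autoref{hyperelliptic Oort conjecture} by the standard Zariski-density argument, using the Andr\'e-Oort conjecture to force a positive-dimensional Shimura subvariety into $\THcal_g$. I would argue by contradiction: suppose there are infinitely many isomorphism classes of smooth hyperelliptic curves of genus $g$ with complex multiplication. Each such curve $C$, once equipped with a level-$N$ structure, determines a point of $\Hcal_g$ whose image under the Torelli map $j^\circ$ lies in $\THcal_g^\circ=j^\circ(\Hcal_g)$ and corresponds to the principally polarized abelian variety $\Jac(C)$; since $\Jac(C)$ has complex multiplication, this image is a CM (special) point of $\Acal_g$. Because the Torelli map is quasi-finite (the principally polarized abelian variety recovers $C$) and each curve carries only finitely many level-$N$ structures, infinitely many such isomorphism classes yield an infinite set $\Sigma\subset\THcal_g^\circ$ of CM points of $\Acal_g$.

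Next I would apply the Andr\'e-Oort conjecture for $\Acal_g$ to $\Sigma$: its Zariski closure $\overline{\Sigma}$ in $\Acal_g$ is a finite union $M_1\cup\cdots\cup M_r$ of Shimura subvarieties, and we may take the $M_i$ to be the irreducible components of $\overline{\Sigma}$. Since $\Sigma$ is infinite, at least one of them, say $M_1$, has positive dimension. On the one hand $\Sigma\subset\THcal_g^\circ\subset\THcal_g$ and $\THcal_g$ is closed, hence $M_1\subset\overline{\Sigma}\subset\THcal_g$. On the other hand, writing $\overline{\Sigma}=\bigcup_i\overline{\Sigma\cap M_i}$ and using that $M_1$ is an irreducible component not contained in any other $M_i$, we obtain $M_1=\overline{\Sigma\cap M_1}$; in particular $\Sigma\cap M_1\neq\emptyset$, so $M_1\cap\THcal_g^\circ\supseteq\Sigma\cap M_1\neq\emptyset$. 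Thus $M_1$ is a Shimura subvariety of positive dimension contained generically in $\THcal_g$.

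For $g>7$ this contradicts \autoref{hyperelliptic Oort conjecture}, so $\Sigma$ must be finite, and therefore there are only finitely many isomorphism classes of smooth hyperelliptic curves of genus $g$ with complex multiplication. The deduction is entirely formal once \autoref{hyperelliptic Oort conjecture} is available; the only point I would flag is the need to know that the positive-dimensional Shimura component produced by Andr\'e-Oort meets the \emph{open} locus $\THcal_g^\circ$ and not merely the boundary $\THcal_g\setminus\THcal_g^\circ$ — this is exactly what the Zariski-density of $\Sigma$ in each irreducible component of $\overline{\Sigma}$ provides, so no genuine obstacle arises beyond \autoref{hyperelliptic Oort conjecture} itself.
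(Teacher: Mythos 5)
Your argument is correct and is essentially the standard deduction that the paper implicitly relies on (the paper states the corollary without giving a proof). The only point worth remarking is the one you already flagged: you need each positive-dimensional Shimura component of $\overline{\Sigma}$ to meet $\THcal_g^\circ$, and you correctly obtain this from the equality $M_1=\overline{\Sigma\cap M_1}$ for an irreducible component $M_1$, which forces $\Sigma\cap M_1\neq\emptyset$.
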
 Coleman's conjecture (cf. \cite{coleman-87}) predicts that for $g$ sufficiently large, there exists at most finitely many smooth curves of genus $g$ (up to isomorphism) whose Jacobians are CM abelian varieties. The corollary above gives a partial answer to the hyperelliptic analogue of this conjecture.


The main idea of the proof is as follows:
\begin{enumerate}
\item[Step 1]
We reduce the problem to the case when $M\subset \Acal_g$ is a simple Shimura variety, in the sense that it is defined by a connected Shimura datum $(\Gbf,X;X^+)$ with $\Gbf^\der$ a $\Qbb$-simple semi-simple $\Qbb$-group. The case of Shimura curves have been studied in \cite{lu-zuo-14}, and we assume that $\Gbf^\der$ is not isomorphic to $\SL_2$ over $\Qbb$, hence the boundary components in the Baily-Borel compactification of $M$ are of codimension at least 2. In particular, the closure $\Mbar$ of $M$ in the Baily-Borel compactification of $\Acal_g$ is obtained by joining boundary components of codimension at least 2, using functorial properties of the Baily-Borel compactification.

\item[Step 2]
Assume that $M$ is a Shimura subvariety of $\Acal_g$ contained generically in $\THcal_g$. Let $C$ be a generic curve in the closure $\Mbar$ of $M$ as above. Then we may take $C$ meeting $\Mbar\bsh M$ trivially due to the codimension condition in Step 1. If $\THcal_g^\sing:=\THcal_g\setminus\THcal_g^\circ$ meets $\Mbar$ also in codimension at least 2, then we may take $C$ meeting $\THcal_g^\sing$ trivially, which contradicts the affineness of the hyperelliptic Torelli locus. Hence the intersection $\THcal_g^\sing\cap M$ contains a divisor of $M$.

\item[Step 3]
The locus of decomposable principally polarized abelian varieties $\Acal^\dec_g$ is a finite union of Shimura subvarieties of $\Acal_g$, and $\Acal_g^\dec\cap\THcal_g=\THcal_g^\sing$. Hence the intersection $\THcal_g^\sing\cap M$ contains a divisor $M'$ which is also a Shimura subvariety. We may then apply dimensional induction to $M'$, and use the result of \cite{lu-zuo-14} to obtain the bound $g>7$.
\end{enumerate}

In \autoref{sec-preliminaries} we collect briefly some facts about Shimura subvarieties,
part of which is reproduced from \cite{chen-lu-zuo}. In \autoref{sec-proof} we prove the main result by completing Step 2 and Step 3 introduced above.

\subsection*{Acknowledgement}This work is supported by SFB/Transregio 45 Periods, Moduli Spaces and Arithmetic of Algebraic Varieties of the DFG (Deutsche Forschungsgemeinschaft), and also supported by National Key Basic Research Program of China (Grant No. 2013CB834202). The first named author is partially supported by National Natural Science Foundation of China (Grant No. 11301495).

\subsection*{Convention and notations} Denote by $\Sbb$ the Deligne torus $\Res_{\Cbb/\Rbb}\Gbb_{\mrm,\Cbb}$. For $k$ a commutative ring, linear $k$-groups stand for affine algebraic $k$-groups. For $\Gbf$ a linear $\Qbb$-group, write $\Gbf(\Rbb)^+$ for the neutral connected component of the Lie group $\Gbf(\Rbb)$, and $\Gbf(\Qbb)^+$ for the intersection $\Gbf(\Qbb)\cap\Gbf(\Rbb)^+$.

\section{Preliminaries on Shimura varieties}\label{sec-preliminaries}
In this section we recall some  facts about Shimura (sub)varieties, functorial properties of Baily-Borel compactification, and the notion of decomposable locus in $\Acal_g$.

We follow \cite{chen-lu-zuo,lu-zuo-14} closely for the basic notions of connected Shimura data and Shimura subvarieties.

\begin{definition}[connected Shimura data, cf. \cite{deligne-pspm}, \cite{milne-05}]
(1) A Shimura datum is a pair   $(\Gbf,X)$ subject to the following constraints:
\begin{enumerate}
\item[SD1]
$\Gbf$ is a connected reductive $\Qbb$-group, and $X$ is a $\Gbf(\Rbb)$-orbit in $\Hom_{\Rbb-\Gr}(\Sbb,\Gbf_\Rbb)$ with $\Sbb=\Res_{\Cbb/\Rbb}\Gbb_{\mrm \Cbb}$ the Deligne torus. We also require that $\Gbf^\ad$ admits no compact $\Qbb$-factors.

\item[SD2]
For any $x\in X$, the composition $\Ad\circ x:\Sbb\ra\Gbf_\Rbb\ra\GL_{\gfrak,\Rbb}$ induces on $\gfrak=\Lie\Gbf$ a rational Hodge structure of type $\{(-1,1),(0,0),(1,-1)\}$.

\item[SD3]
For any $x\in X$, the conjugation by $x(\sqrt{-1})$ induces a Cartan involution on $\Gbf^\ad_\Rbb$.
\end{enumerate}

It is known that $X$ is a finite union of Hermitian symmetric domains, each connected component of which is homogeneous under $\Gbf^\der(\Rbb)^+$.

A morphism between Shimura data is a pair $(f,f_*):(\Gbf,X)\ra(\Gbf',X')$ where $f:\Gbf\ra\Gbf'$ is a $\Qbb$-group homomorphism, such that the push-forward $f_*:\Hom_{\Rbb-\Gr}(\Sbb,\Gbf_\Rbb)\ra\Hom_{\Rbb-\Gr}(\Sbb,\Gbf'_\Rbb)$ sends $X$ into $X'$. When $f$ is an inclusion of a $\Qbb$-subgroup, the push-forward $f_*$ is injective, and we get the notion of Shimura subdata.

In particular, if $(f,f_*):(\Gbf,X)\ra(\Gbf',X')$ is a morphism of Shimura data, then it is easily verified that the pair $(f(\Gbf),f_*(X))$ is a subdatum of $(\Gbf',X')$, called the image subdatum of the morphism.

When $(\Gbf,X)$ is a subdatum of $(\Gbf',X')$ with $\Gbf$ a $\Qbb$-torus, then $X$ consists of a single point $\{x\}$ and one writes $(\Gbf,x)$ for simplicity.


(2) A connected Shimura datum is of the form $(\Gbf,X;X^+)$ where $(\Gbf,X)$ is a Shimura datum and $X^+$ is a connected component of $X$. Notions like morphisms between connected Shimura data, connected Shimura subdata, etc. are defined in the evident way.

\end{definition}


\begin{definition}[Shimura varieties and Shimura subvarieties]
(1)  A (connected) Shimura variety is a quotient of the form $\Gamma\bsh X^+$ where $X^+$ is a connected component from some connected Shimura datum $(\Gbf,X;X^+)$ and $\Gamma\subset\Gbf^\der(\Rbb)^+$ is an arithmetic subgroup. We write $\wp_\Gamma:X^+\ra\Gamma\bsh X^+$ for the uniformization map $x\mapsto \Gamma x$.

(2) For $M=\Gamma\bsh X^+$ a Shimura variety as in (1), a Shimura subvariety of $M$ is of the form $\wp_\Gamma(X'^+)$ where $X'^+$ comes from some connected Shimura subdatum $(\Gbf',X';X'^+)\subset(\Gbf,X;X^+)$. If we choose an arithmetic subgroup $\Gamma'$ of $\Gbf'^\der(\Rbb)^+$ which is also contained in $\Gamma$, then $\wp_\Gamma(X'^+)$ is the same as the image of $\Gamma'\bsh X'^+\ra\Gamma\bsh X^+, \ \Gamma'x'\mapsto \Gamma x'$.

In particular, if $(\Tbf,x)$ is a connected subdatum with $\Tbf$ a $\Qbb$-torus in $\Gbf$, the Shimura subvariety we obtained is a point. In the literature it is often referred to as special points or CM points, because in the Siegel case, i.e. when $(\Gbf,X)=(\GSp_V,\Hcal_V)$ and $M=\Acal_g$ cf.\autoref{Siegel modular varieties} below, they correspond to CM abelian varieties via the modular interpretation.
\end{definition}

\begin{remark}
In standard references on Shimura varieties, like \cite{deligne-pspm} and \cite{milne-05}, a complex Shimura variety is defined adelically as the double quotient $M_K(\Gbf,X)=\Gbf(\Qbb)\bsh(X\times\Gbf(\adele)/K)$ where $(\Gbf,X)$ is a Shimura datum and $K\subset\Gbf(\adele)$ is a \cosg.
{Fix a connected component $X^+$ of $X$ and a set of representatives $\{q\}$ of the double quotient $\Gbf(\Qbb)_+\bsh\Gbf(\adele)/K$ with $\Gbf(\Qbb)_+$ being the stabilizer of $X^+$ in $\Gbf(\Qbb)$,} we get an isomorphism $M_K(\Gbf,Y)\isom\coprod_q\Gamma_K(q)\bsh X^+$, with $\Gamma_K(q):=\Gbf(\Qbb)_+\cap qKq^\inv$ a congruence subgroup of $\Gbf(\Qbb)_+$ which acts on $X^+$ through its image in $\Gbf^\ad(\Qbb)^+$, which in turn is an arithmetic subgroup of $\Gbf^\ad(\Qbb)^+$ by \cite{borel-69} 8.9 and 8.11.
The adelic setting is convenient for discussion of arithmetic properties like canonical models. However in our study it suffices to treat Shimura varieties as complex algebraic varieties, and from the viewpoint of Baily-Borel compactification the definition of connected Shimura varieties as $\Gamma\bsh X^+$ given above is sufficient, because an arithmetic subgroup $\Gamma$ of $\Gbf^\der(\Rbb)^+$ acts on $X^+$ through its image in $\Gbf^\ad(\Rbb)^+$ which is again an arithmetic subgroup of $\Gbf^\ad(\Rbb)^+$ by \cite{borel-69}.\end{remark}

\begin{example}[{Siegel modular varieties, cf. \cite[Example\,2.1.7]{chen-lu-zuo}}]\label{Siegel modular varieties} Let $(V_\Zbb,\psi)$ be a symplectic space over $\Zbb$ with $\psi:V_\Zbb\times V_\Zbb\ra \Zbb$ an symplectic pairing of discriminant $\pm1$. Writing $(V,\psi)$ for the symplectic $\Qbb$-space obtained by base change $\Zbb\ra\Qbb$, we get the connected reductive $\Qbb$-group of simplectic similitude $\GSp_V$ together with a homomorphism of $\Qbb$-groups $\lambda:\GSp_V\ra\Gbb_\mrm$, such that $\psi(gv,gv')=\lambda(g)\psi(v,v')$ for $g\in\GSp_V$ and $v,v'\in V$.

We put $\Hcal_V$ for the set of polarizations of $(V,\psi)$, i.e., the set of $\Rbb$-group homomorphisms $h:\Sbb\ra\GSp_{V,\Rbb}$ such that $h$ induces a $\Cbb$-structure on $V_\Rbb$ and $\psi(h(\sqrt{-1})v,v')$ is symmetric and definite (positive or negative). The set $\Hcal_V$ is naturally identified with the Siegel double half-space of genus $g=\frac{1}{2}\dim_\Qbb V $, and the pair $(\GSp_V,\Hcal_V)$ is a Shimura datum. Let $\Hcal_V^+$ be the connected component of $\Hcal_V$ corresponding to positive definite polarizations, and let $\Gamma\subset\Sp_V(\Rbb)$ be an arithmetic subgroup. Then the quotient $\Gamma\bsh \Hcal_V^+$ is referred to as a Siegel modular variety of level $\Gamma$. This is motivated by the case when $\Gamma=\Gamma(N)=\Ker(\Sp_{V_\Zbb}(\Zbb)\ra\Sp_{V_\Zbb}(\Zbb/N))$ is the $N$-th principal congruence subgroup using the integral structure $V_\Zbb$, which gives $\Gamma(N)\bsh\Hcal_V^+$ as the moduli space of principally polarized abelian varieties of dimension $g$ with level-$N$ structure, constructed by Mumford in \cite{fogarty-kirwan-mumford}.

As we have mentioned, in this paper we fix $N\geq 3$ and put $\Acal_g=\Acal_{g,N}$ to be the Siegel modular variety associated to the standard symplectic space on $V_\Zbb=\Zbb^{2g}$. The condition $N\geq 3$ assures the representability of the moduli problem. The Shimura datum is also written as $(\GSp_{2g},\Hcal_g)$.


\end{example}

For simplicity we only consider arithmetic subgroups that are torsion-free. The quotients $\Gamma\bsh X^+$ are therefore smooth complex manifolds.

\begin{theorem}[Baily-Borel compactification, \cite{baily-borel-66}. \cite{borel-metric}]\label{baily-borel-66 compactification}
Let $M=\Gamma\bsh X^+$ be a Shimura variety. Then the following hold:

(1) $M$ is a normal quasi-projective algebraic variety over $\Cbb$, and it admits a compactification, called the Baily-Borel compactification $M^\BB$, which is universal in the sense that if $M\ra Z$ is a morphism of complex algebraic varieties with $Z$ projective, then it admits a unique factorization $M\mono M^\BB\ra Z$.

(2) The boundary components of $M$, i.e., irreducible components of $M^\BB \setminus M$ are of codimension at least 2, unless $\Gbf^\der$ admits a $\Qbb$-factor isogeneous to $\SL_{2,\Qbb}$.

(3) The Baily-Borel compactification is functorial, in the sense that if $f:(\Gbf',X';X'^+)\ra(\Gbf,X;X^+)$ is a morphism of connected Shimura data and $\Gamma'\subset\Gbf'^\der(\Rbb)^+$ and $\Gamma\subset\Gbf^\der(\Rbb)^+$ are arithmetic subgroups such that $f(\Gamma')\subset\Gamma$, then the induced map $f:M'=\Gamma'\bsh X'^+\ra M=\Gamma\bsh X^+$ is a morphism between algebraic varieties over $\Cbb$, and it extends uniquely to the compactifications $M'^\BB\ra M^\BB$.
\end{theorem}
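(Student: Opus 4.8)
The plan is to prove the three assertions in turn: (1) is the analytic theorem of Baily and Borel, and (2), (3) then follow comparatively formally from it.

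\emph{Part (1).} Fix one connected component, so that $X^+=D$ is a bounded symmetric domain in its Harish-Chandra realization, on which $\Gamma$ — torsion-free by hypothesis — acts freely and properly discontinuously through its image in $\Gbf^\ad(\Rbb)^+$; thus $M=\Gamma\bsh D$ is a complex manifold. I would decompose the topological boundary of $D$ into its boundary components, each again a Hermitian symmetric domain of smaller dimension, keep the \emph{rational} ones (those whose stabilizer in $\Gbf^\ad$ is the group of real points of a $\Qbb$-parabolic subgroup), and form the Satake partial compactification $D^*=D\sqcup\bigsqcup_{F\text{ rational}}F$ with the cylindrical topology, on which $\Gbf(\Qbb)^+$ acts. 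Via reduction theory (Siegel sets, after Borel--Harish-Chandra) one shows that $\Gamma$ acts on $D^*$ properly discontinuously with compact Hausdorff quotient containing $M$ as a dense open subset; one then puts a normal complex-analytic structure on $\Gamma\bsh D^*$ by writing down local charts transverse to the boundary strata; and finally, using the Fourier--Jacobi expansions of automorphic forms along the boundary together with enough Eisenstein and Poincar\'e series, one shows that the graded ring $A(\Gamma)=\bigoplus_{k\ge 0}A_k(\Gamma)$ of $\Gamma$-automorphic forms is finitely generated and that a sufficiently divisible power of the automorphy (Hodge) line bundle embeds $\Gamma\bsh D^*$ into a projective space. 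Hence $M^\BB:=\Gamma\bsh D^*=\mathrm{Proj}\,A(\Gamma)$ is a normal projective variety and $M$ is quasi-projective.

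For the universal property, given a morphism $f:M\to Z$ with $Z$ projective, I would extend it to a rational map $M^\BB\dashrightarrow Z$, take the closure $\Gamma_f\subset M^\BB\times Z$ of its graph, and consider the projections $p:\Gamma_f\to M^\BB$ and $q:\Gamma_f\to Z$. Then $p$ is proper and birational onto the normal variety $M^\BB$ and an isomorphism over $M$, so by Zariski's main theorem it suffices to check that $p$ does not contract anything over the boundary. Here one uses the explicit local model of $M^\BB$ near a boundary point — a quotient involving the corresponding boundary component and a Siegel-domain factor — from which one reads off that the local ring is already maximal among normal local rings dominated by the germ of $M$, so that no morphism to a projective variety can separate points of a would-be contracted fibre; granting this, $p$ is an isomorphism and $f$ extends uniquely (uniqueness because $M$ is dense in $M^\BB$) as $q\circ p^{-1}$. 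I expect this isomorphism step to be the delicate point of the universal-property argument.

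\emph{Part (2).} Since the boundary structure of $M^\BB$ depends only on $\Gbf^\ad$, write $\Gbf^\ad=\prod_i\Gbf_i$ as a product of $\Qbb$-simple adjoint groups and $D=\prod_iD_i$ accordingly. Every rational boundary component of $D$ is a product $\prod_iF_i$ in which each $F_i$ is either $D_i$ or a proper rational boundary component of $D_i$, not all equal to $D_i$, and the corresponding stratum of $M^\BB$ has codimension $\sum_i(\dim_\Cbb D_i-\dim_\Cbb F_i)$; hence the smallest positive codimension equals $\min_i(\dim_\Cbb D_i-\dim_\Cbb F_i^{\max})$ taken over those $i$ for which $\Gbf_i$ is $\Qbb$-isotropic, $F_i^{\max}$ being a maximal proper rational boundary component (attached to a maximal $\Qbb$-parabolic of $\Gbf_i$). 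A case check against the classification of Hermitian $\Qbb$-simple groups — equivalently, an inspection of the unipotent radical of such a parabolic — then shows $\dim_\Cbb D_i-\dim_\Cbb F_i^{\max}=1$ precisely when $D_i$ is the upper half-plane, i.e.\ when $\Gbf_i$ is $\Qbb$-isogenous to the adjoint group of $\SL_{2,\Qbb}$, and is $\ge2$ in all other cases; this yields the dichotomy in (2).

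\emph{Part (3).} This is then formal: the composite $M'\to M\hookrightarrow M^\BB$ is a morphism from $M'$ to a projective variety, so the universal property of $M'^\BB$ established in Part (1) gives a unique factorization $M'\hookrightarrow M'^\BB\xrightarrow{\,f^\BB\,}M^\BB$, and $f^\BB$ is the required extension. (Alternatively: $f$ carries rational boundary components of $X'^+$ to rational boundary components of $X^+$ compatibly with the parabolic structures, and $f(\Gamma')\subset\Gamma$ makes the map descend to a morphism of the Satake partial compactifications, hence of analytic spaces, hence algebraic by GAGA.) The main obstacle in the whole argument is the analytic core of Part (1) — the compactness and the normal projective-analytic structure on $\Gamma\bsh D^*$, resting on reduction theory, boundary charts, and finite generation of the ring of automorphic forms, which is the substance of the Baily--Borel theorem; once that is in place, Parts (2) and (3) are essentially formal.
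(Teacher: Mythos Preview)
The paper does not give a proof of this theorem at all: it is stated as a background result imported from the literature, with the citations \cite{baily-borel-66} and \cite{borel-metric} serving in lieu of proof, and is then used as a black box in \autoref{boundary components} and in the proof of the main theorem. So there is no ``paper's own proof'' to compare against; your sketch is essentially an outline of what those cited references do.

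That said, a remark on your sketch. Your treatment of the construction in Part~(1) and the codimension count in Part~(2) follows the standard arguments of Baily--Borel and is fine as an outline. For the universal property, however, your route via Zariski's main theorem and an ad hoc analysis of local rings at the boundary is not the way this is usually done, and you yourself flag the contraction step as ``delicate'' without actually carrying it out. The cleaner and more robust argument is Borel's extension theorem from \cite{borel-metric} (which the paper explicitly cites for exactly this purpose): a holomorphic map from $M$ to a projective variety extends over each boundary point because, in suitable local coordinates near the boundary, $M$ looks like a product of punctured discs and discs, and the map extends across the punctures by a hyperbolic-distance (Kobayashi) argument combined with the big Picard/Riemann extension theorem. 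Once you have that extension result, Part~(3) is, as you say, formal. Replacing your Zariski-main-theorem paragraph with an appeal to Borel's extension theorem would close the only real gap in your write-up and bring it in line with how the paper intends these facts to be sourced.
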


\begin{corollary}[boundary components]\label{boundary components}
Let $M=\Gamma\bsh X^+$ be a Shimura variety defined by $(\Gbf,X;X^+)$ and an arithmetic subgroup $\Gamma\subset\Gbf^\der(\Rbb)^+$. Let $M'\subset M$ be a Shimura subvariety defined by $(\Gbf',X';X'^+)\subset(\Gbf,X;X^+)$, and assume that $\Gbf'^\der$ admits no $\Qbb$-factor isogeneous to $\SL_{2,\Qbb}$. Write $\Mbar'$ for the closure of $M'$ inside $M^\BB$,
then the irreducible components of $\Mbar'\setminus M'$ are of codimension at least 2 in $\Mbar'$.
\end{corollary}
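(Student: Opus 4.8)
The plan is to derive this formally from the functoriality of the Baily--Borel compactification, i.e.\ part (3) of \autoref{baily-borel-66 compactification}, together with the codimension bound in part (2) of the same theorem applied to $M'$ itself; the hypothesis that $\Gbf'^\der$ has no $\Qbb$-factor isogeneous to $\SL_{2,\Qbb}$ enters precisely so that part (2) is available for $M'$. First I would fix, as in the definition of a Shimura subvariety, an arithmetic subgroup $\Gamma'\subset\Gbf'^\der(\Rbb)^+$ with $\Gamma'\subset\Gamma$, and set $N'=\Gamma'\bsh X'^+$, so that $M'$ is the image of the natural morphism $\iota\colon N'\ra M$ (and $M'=\wp_\Gamma(X'^+)=\iota(N')$ as subsets of $M$). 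Since the uniformization $\wp_\Gamma\colon X^+\ra M$ is a local isomorphism ($\Gamma$ being torsion-free), $\dim M'=\dim X'^+=\dim N'=:d$. By functoriality, $\iota$ extends to a morphism $\bar\iota\colon N'^{\BB}\ra M^{\BB}$ of projective varieties; since $N'$ is dense in $N'^{\BB}$ and $\bar\iota$ is continuous with closed image, $\bar\iota(N'^{\BB})$ is the closure of $\iota(N')=M'$ in $M^{\BB}$, that is $\bar\iota(N'^{\BB})=\Mbar'$, and in particular $\Mbar'$ is irreducible of dimension $d$.

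Next I would use the hypothesis: since $\Gbf'^\der$ has no $\Qbb$-factor isogeneous to $\SL_{2,\Qbb}$, part (2) of \autoref{baily-borel-66 compactification} applied to $N'$ gives that $B':=N'^{\BB}\setminus N'$ has codimension at least $2$ in $N'^{\BB}$, so $\dim B'\le d-2$. Then one compares $\Mbar'\setminus M'$ with $\bar\iota(B')$: if $p\in\Mbar'\setminus M'$ and $p=\bar\iota(q)$ for some $q\in N'^{\BB}$, then necessarily $q\notin N'$, for otherwise $p=\iota(q)\in M'$; hence $\Mbar'\setminus M'\subseteq\bar\iota(B')$. Since morphisms do not increase dimension, $\dim(\Mbar'\setminus M')\le\dim\bar\iota(B')\le\dim B'\le d-2$, and as $\Mbar'$ is of pure dimension $d$ every irreducible component of $\Mbar'\setminus M'$ has codimension at least $2$ in $\Mbar'$, as claimed.

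There is no real obstacle here; the only point to be careful about is that $\bar\iota$ need not be a closed immersion (the map $N'\ra M$ can be finite-to-one, and a boundary point of $N'^{\BB}$ could a priori be sent into the interior $M\subset M^{\BB}$), so one must argue throughout with the images $\iota(N')$, $\bar\iota(N'^{\BB})$, $\bar\iota(B')$ and with closures, rather than identifying $N'^{\BB}$ with $\Mbar'$. Granting this bookkeeping, the statement is a clean consequence of parts (2) and (3) of \autoref{baily-borel-66 compactification}.
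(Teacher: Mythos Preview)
Your proposal is correct and follows essentially the same route as the paper: form the Shimura variety $N'=\Gamma'\bsh X'^+$ for a suitable $\Gamma'\subset\Gamma$, extend $N'\ra M$ to $N'^{\BB}\ra M^{\BB}$ via \autoref{baily-borel-66 compactification}(3), and then bound $\Mbar'\setminus M'$ by the image of $N'^{\BB}\setminus N'$, whose codimension is controlled by \autoref{baily-borel-66 compactification}(2). The paper chooses $\Gamma'=\Gamma\cap\Gbf'^\der(\Rbb)^+$ and asserts that $N'\ra M$ is already a closed immersion, whereas you work more cautiously with images and closures without assuming injectivity; this extra care is harmless and, if anything, makes the argument a bit more robust.
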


\begin{proof}
The inclusion $M'\mono M$ is a morphism of algebraic varieties $\Gamma'\bsh X'^+\ra \Gamma\bsh X^+$ with $\Gamma'=\Gamma\cap\Gbf'^\der(\Rbb)^+$ a torsion-free arithmetic subgroup of $\Gbf'^\der(\Rbb)^+$, which is also a closed immersion. By \autoref{baily-borel-66 compactification}(3) it extends to a morphism of projective algebraic varieties $M'^\BB\ra M^\BB$ which is generically injective, and the closure $\overline{M}'$ of $M'$ in $M^\BB$ is also the closure of the image of $M'^\BB$. Since $M'^\BB$ only joins to $M'$ finitely many boundary components of codimension at least 2, we see that $\overline{M}'$ only differs from $M'$ by a closed subvariety of codimension at least 2.\end{proof}

\begin{definition}[decomposable locus]
A principal polarized abelian variety $A$ over $\Cbb$ is said to be decomposable if it is isomorphic to a product $A=A_1\times A_2$ with $A_1$ and $A_2$ both principally polarized of dimension $>0$ such that the polarization of $A$ is isomorphic to the one induced by the two polarizations on $A_1$ and $A_2$ respectively. We thus get the locus $\Acal_g^\dec\subset\Acal_g$ of decomposable principal polarized abelian varieties.
\end{definition}

\begin{example}[Shimura subvarieties of decomposable locus]\label{Shimura subvarieties of decomposable locus}

Given $(U,\psi_U)$ and $(W,\psi_W)$ two symplectic $\Qbb$-spaces of dimension $2m$ and $2n$ respectively, the direct sum $V=U\oplus W$ naturally carries a symplectic structure $$\psi_V:=\psi_U\oplus\psi_W:(u,w)\times(u',w')\mapsto\psi_U(u,u')+\psi_W(w,w').$$ This gives rise to the following $\Qbb$-group homomorphism $$f_{U,W}:\GSp_{U,W}:=\GSp_U\times_{\Gbb_\mrm}\GSp_W\ra\GSp_V$$ which is an inclusion: the fibred product $\GSp_{U,W}$ is defined by the two homomorphisms $$\lambda_U:\GSp_U\ra\Gbb_\mrm\la\GSp_W:\lambda_W$$ and it is the $\Qbb$-subgroup of $\GSp_V$ whose elements can be written as pairs $(g_U,g_W)$ with $g_U\in\GSp_U$ and $g_W\in\GSp_W$ acting on $U$ and on $W$ respectively with the same scalar of similitude $\lambda_V((g_U,g_W))=\lambda_U(g_U)=\lambda_W(g_W)$.

We proceed to show that the $\Qbb$-group homomorphism $f_{U,W}$ above extends to a morphism of Shimura data $(\GSp_{U,W},\Hcal_{U,W})\ra(\GSp_V,\Hcal_V)$. Here $\Hcal_{U,W}$ is the set of pairs $(h_U,h_W)$ in $\Hcal_U\times\Hcal_W$ such that $\lambda_U\circ h_U=\lambda_W\circ h_W$. Such a pair $(h_U,h_W)$ naturally defines an element of $\Hcal_V$. The set $\Hcal_{U,W}$ is  homogeneous under $\GSp_{U,W}(\Rbb)$. In fact, given two such pairs $(h_U,h_W)$ and $(h'_U,h'_W)$, there exists some element $g_U\in\GSp_U(\Rbb)$ such that $g_U(h_U)=h'_U$. Choose any  $g=(g_U,g_W)\in\GSp_{U,W}$ lifting $g_U$, we get a third pair $(h'_U,h''_W)=g(h_U,h_W)=(g_U(h_U),g_W(h_W))$, with $\lambda_W(g_W(h_W))=\lambda_U(g_U(h_U))=\lambda_U(h'_U)=\lambda_W(h'_W)$, hence $g_W(h_W)$ and $h'_W$ only differs by the conjugation of some element of $\Sp_W(\Rbb)=\Ker(\GSp_W(\Rbb)\ra\Gbb_\mrm(\Rbb))$, and there exists $g'=(g_U,g_Wg'_W)\in\GSp_{U,W}(\Rbb)$ with $g'_W\in\Sp_W(\Rbb)$ sending $(h_U,h_W)$ to $(h'_U,h'_W)$.

We verify briefly that the pair $(\GSp_{U,W},\Hcal_{U,W})$ satisfies the axioms defining Shimura data. For the Hodge type given by  $h=(h_U,h_W)$ on $\Lie\GSp_{U,W}$, it suffices to notice that $h:\Sbb\ra\GSp_{V,\Rbb}$ has image in $\GSp_{U,W,\Rbb}$ and thus the adjoint action of $h(\Sbb)$ on $\Lie\GSp_{V,\Rbb}$ stabilizes $\Lie\GSp_{U,W,\Rbb}$. The adjoint quotient $\GSp_{U,W}^\ad$ is clearly isomorphic to $\GSp_U^\ad\times\GSp_W^\ad$, and $h(\sqrt{-1})$ induces a Cartan involution on it because it is so with $h_U$ for $\GSp_U^\ad$ and with $h_W$ for $\GSp_W^\ad$.

In particular, $\GSp_{U,W}^\der\isom\Sp_U\times\Sp_W$, and a connected component of $\Hcal_{U,W}$ is isomorphic to $\Hcal_U^+\times\Hcal_W^+$. This gives us a connected Shimura subdatum $$f_{U,W}:(\GSp_{U,W},\Hcal_{U,W};\Hcal^+_U\times\Hcal^+_W)\mono(\GSp_V,\Hcal_V;\Hcal_V^+).$$

If $(U,\psi_U)$ and $(W,\psi_W)$ are given by standard integral symplectic structures $U_\Zbb\isom\Zbb^{2m}$ and $W_\Zbb\isom\Zbb^{2n}$, then we naturally have $V$ given by the standard integral one $V_\Zbb\isom\Zbb^{2g}$ with $g=m+n$. The $N$-th principal congruence subgroup $\Gamma_V(N)=\Ker(\Sp_{V_\Zbb}(\Zbb\ra\Sp_{V_\Zbb}(\Zbb/N)))$ naturally restricts to the congruence subgroup $\Gamma_U(N)\times\Gamma_W(N)$ of $\GSp^\der_{U,W}(\Rbb)^+$ via $\Sp_U\times\Sp_W=\GSp^\der_{U,W}\mono\Sp_V$, and we get $\Gamma_U(N)\times\Gamma_W(N)\bsh\Hcal_VU^+\times\Hcal_W^+$ as a Shimura subvariety of $\Acal_g=\Gamma_V(N)\bsh\Hcal_V^+$, which we denote as $\Acal_{m,n}$ with $m,n>0$ and $m+n=g$.

\end{example}



\begin{lemma}\label{A-dec} The decomposition locus $\Acal_g^\dec$ is a finite union of Shimura subvarieties in $\Acal_g$.
\end{lemma}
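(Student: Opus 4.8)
\medskip
\noindent\emph{Proof proposal.}
The plan is to stratify $\Acal_g^\dec$ according to the ``shape'' of the decomposition and to identify each stratum with a \emph{finite} union of Hecke translates of the subvarieties $\Acal_{m,n}$ constructed in \autoref{Shimura subvarieties of decomposable locus}. First I would write $\Acal_g^\dec=\bigcup_{m+n=g,\ 0<m\le n}\Acal_g^{\dec,(m,n)}$, where $\Acal_g^{\dec,(m,n)}$ denotes the locus of $A$ admitting a decomposition $A\isom A_1\times A_2$ into principally polarized factors of dimensions $m$ and $n$; this is a finite union since any nontrivial decomposition into principally polarized factors refines to one with exactly two factors. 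Hence it suffices to show that each $\Acal_g^{\dec,(m,n)}$ is a finite union of Shimura subvarieties.

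Next I would transfer the condition to the uniformizing space. Recall (\autoref{Siegel modular varieties}) that $\Acal_g=\Gamma\bsh\Hcal_V^+$ with $\Gamma=\Gamma_V(N)$ and with $V_\Zbb=\Zbb^{2g}$ its standard symplectic lattice. A point $x\in\Hcal_V^+$ lies over $\Acal_g^{\dec,(m,n)}$ precisely when $V_\Zbb$ admits a $\psi$-orthogonal direct sum decomposition $V_\Zbb=U_\Zbb\oplus W_\Zbb$ into sublattices of ranks $2m$ and $2n$ such that $U_\Rbb$ and $W_\Rbb$ are stable under $x(\Cbb^\times)$ (the orthogonality together with unimodularity of $\psi$ on $V_\Zbb$ automatically makes $\psi$ unimodular on each summand, so the two factors carry principal polarizations); indeed the stability condition says exactly that the splitting is by sub-Hodge structures, i.e.\ that $x$ factors through $\GSp_{U,W,\Rbb}\subset\GSp_{V,\Rbb}$. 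Thus the preimage of $\Acal_g^{\dec,(m,n)}$ in $\Hcal_V^+$ is $\bigcup_{(U_\Zbb,W_\Zbb)}\Hcal_{U,W}^+$, the union running over all integral symplectic splittings of type $(2m,2n)$.

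Now I would invoke the structure theory of unimodular symplectic lattices over $\Zbb$: every saturated sublattice of $(\Zbb^{2g},\psi_\Std)$ of even rank on which $\psi_\Std$ restricts to a unimodular form is an orthogonal direct summand, its complement is again unimodular symplectic, and $\Sp_{2g}(\Zbb)$ acts transitively on such sublattices of a fixed rank. Consequently the integral symplectic splittings of type $(2m,2n)$ form a single $\Sp_{V_\Zbb}(\Zbb)$-orbit; since $\Gamma=\Gamma_V(N)$ is a normal subgroup of finite index in $\Sp_{V_\Zbb}(\Zbb)$, this set breaks into finitely many $\Gamma$-orbits, with representatives $(U_\Zbb^{(i)},W_\Zbb^{(i)})$, $1\le i\le r$, and therefore $\Acal_g^{\dec,(m,n)}=\bigcup_{i=1}^{r}\wp_\Gamma\!\bigl(\Hcal_{U^{(i)},W^{(i)}}^+\bigr)$. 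Each $\wp_\Gamma\bigl(\Hcal_{U^{(i)},W^{(i)}}^+\bigr)$ is a Shimura subvariety: the pair $\bigl(\GSp_{U^{(i)},W^{(i)}},\Hcal_{U^{(i)},W^{(i)}};\Hcal_{U^{(i)},W^{(i)}}^+\bigr)$ is a connected Shimura subdatum of $(\GSp_V,\Hcal_V;\Hcal_V^+)$ by the verification in \autoref{Shimura subvarieties of decomposable locus} (which used only the $\Qbb$-symplectic structure), and equivalently one conjugates the standard subdatum of that example by an element of $\Sp_{V_\Zbb}(\Zbb)\subset\GSp_V(\Qbb)$ carrying the standard splitting to $(U_\Zbb^{(i)},W_\Zbb^{(i)})$, such an element normalizing $\Gamma_V(N)$. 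Assembling over the finitely many pairs $(m,n)$ and the finitely many indices $i$ yields the claim.

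The only non-formal input is the transitivity (or merely finiteness of the number of orbits) of the $\Sp_{2g}(\Zbb)$-action on unimodular symplectic sublattices of $\Zbb^{2g}$, so I expect that step to be the one demanding care; it is, however, classical, being a symplectic Witt-type theorem over $\Zbb$ (in contrast to the orthogonal case there is no room for distinct genera), and everything else is bookkeeping together with the Shimura-datum verification already contained in \autoref{Shimura subvarieties of decomposable locus}.
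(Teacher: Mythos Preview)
Your proof is correct and follows the same underlying idea as the paper's---stratify by the pair of dimensions and identify each stratum with Shimura subvarieties of the form $\Acal_{m,g-m}$ from \autoref{Shimura subvarieties of decomposable locus}. The paper's argument is a one-liner: it simply asserts
\[
\Acal_g^\dec=\bigcup_{1\leq m\leq\lfloor g/2\rfloor}\Acal_{m,g-m},
\]
declaring that any decomposable $A\isom A_1\times A_2$ with $\dim A_1=m$ lies in $\Acal_{m,g-m}$.

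The difference is that you go further and honestly track the level-$N$ structure. With $N\geq 3$ the symplectic splitting of $V_\Zbb$ induced by $A\isom A_1\times A_2$ need not be $\Gamma_V(N)$-equivalent to the standard one, so a priori the point lands only in some Hecke translate of $\Acal_{m,g-m}$; your use of Witt's theorem over $\Zbb$ for symplectic lattices, together with the finite index of $\Gamma_V(N)$ in $\Sp_{V_\Zbb}(\Zbb)$, shows that only finitely many such translates occur. The paper's displayed formula is thus literally a bit too quick (it suppresses these translates), though its conclusion---that $\Acal_g^\dec$ is a finite union of Shimura subvarieties---is of course unaffected. Your version supplies exactly the bookkeeping the paper leaves implicit, at the cost of being longer; for the purposes of the later arguments in the paper the coarse statement suffices.
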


\begin{proof} If $A$ is a principally polarized abelian variety decomposed as $A\isom A_1\times A_2$ with $A_1$ of dimension $m$ and $A_2$ of dimension $n=g-m$, where we assume for simplicity $m\leq n$, then the point in $\Acal_g$ parameterizing $A_1\times A_2$ naturally lies in $\Acal_{m,g-m}\subset\Acal_g$ in the sense of \autoref{Shimura subvarieties of decomposable locus}. Hence we get $$\Acal_g^\dec=\bigcup_{1\leq m\leq\lfloor g/2\rfloor}\Acal_{m,g-m}$$ which is a finite union of Shimura subvarieties. \end{proof}

Finally we mention the following useful fact:

\begin{lemma}[intersection of Shimura subvarieties]\label{intersection of Shimura subvarieties} Let $M'$ and $M''$ be Shimura subvarieties of an ambient Shimura variety $M=\Gamma\bsh X^+$ defined by $(\Gbf,X;X^+)$.  Then $M'\cap M''$ is a finite union of Shimura subvarieties of $\Acal_g$ if the intersection is non-empty.
\end{lemma}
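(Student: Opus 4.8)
The plan is to reduce the assertion to the standard fact that through every point of a Shimura variety there passes a \emph{smallest} Shimura subvariety, and then to recognise the finitely many irreducible components of $M'\cap M''$ using generic Mumford--Tate groups. Recall the pointwise input: for $x\in X^+$ let $\MT(x)\subseteq\Gbf$ be the Mumford--Tate group of $x$, the smallest $\Qbb$-subgroup of $\Gbf$ through which $h_x:\Sbb\ra\Gbf_\Rbb$ factors. Then $\MT(x)$, equipped with the connected component $X_{\MT(x)}^+$ through $h_x$ of the $\MT(x)(\Rbb)$-conjugacy class of $h_x$, is a connected Shimura subdatum of $(\Gbf,X;X^+)$, so $Z_x:=\wp_\Gamma\bigl(X_{\MT(x)}^+\bigr)$ is a Shimura subvariety of $M$; a different lift of $P:=\wp_\Gamma(x)$ yields a $\Gamma$-conjugate subdatum with the same image, so we may write $Z_P$ for this subvariety. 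Moreover $Z_P$ is the smallest Shimura subvariety of $M$ containing $P$: if $N\ni P$ is defined by $(\Gbf_N,X_N;X_N^+)\subseteq(\Gbf,X;X^+)$, choosing a lift of $P$ inside $X_N^+$ shows $h$ at that point factors through $\Gbf_{N,\Rbb}$, hence $\MT\subseteq\Gbf_N$ and $Z_P\subseteq N$.

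Apply this with $N=M'$ and $N=M''$: for each $P\in M'\cap M''$ one has $Z_P\subseteq M'\cap M''$, hence $M'\cap M''=\bigcup_{P\in M'\cap M''}Z_P$. As $M'$ and $M''$ are closed algebraic subvarieties of $M$, so is $M'\cap M''$, which therefore has finitely many irreducible components $W_1,\dots,W_r$; it remains to see each $W_i$ is a Shimura subvariety. Fix $i$, pick a connected component $\tilde W$ of $\wp_\Gamma^{-1}(W_i)$ inside $X^+$, and let $\Hbf\subseteq\Gbf$ be the generic Mumford--Tate group along $\tilde W$, the common value of $\MT(x)$ for $x$ outside a countable union of proper analytic subsets of $\tilde W$. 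For such generic $x$ the smallest Shimura subvariety through $\wp_\Gamma(x)$ is $Z:=\wp_\Gamma(X_\Hbf^+)$, so $Z\subseteq W_i$. Conversely $\MT(y)\subseteq\Hbf$ for \emph{every} $y\in\tilde W$, so $h_y$ factors through $\Hbf_\Rbb$; since this holds on a dense subset of the connected set $\tilde W$ it holds throughout, forcing $\tilde W\subseteq X_\Hbf^+$ and hence $W_i\subseteq Z$. Thus $W_i=Z$ is a Shimura subvariety and $M'\cap M''=\bigcup_iW_i$ is a finite union of Shimura subvarieties.

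The steps that are not purely formal are: the existence of the generic Mumford--Tate group $\Hbf$ with a \emph{proper} jumping locus, a Hodge-theoretic fact (one may invoke Cattani--Deligne--Kaplan, or combine the countability of Shimura subvarieties with a Baire argument applied to the polarised $\Qbb$-variation of Hodge structure on $M$ coming from its Shimura structure); and the inclusion $\MT(y)\subseteq\Hbf$ for arbitrary $y\in\tilde W$, which holds because the tensors cut out by $\Hbf$ stay of Hodge type along all of $\tilde W$. The only real nuisance is bookkeeping with connected components when passing between an $\Hbf(\Rbb)$-conjugacy class and a genuine connected Shimura subdatum, which is harmless since $X_\Hbf^+$ is connected and every locus above is taken inside the fixed component $X^+$; I expect this to be the main (minor) obstacle. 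A purely group-theoretic variant --- a component of $\wp_\Gamma^{-1}(M')$ is a single $\Gamma$-translate $\gamma X'^+$, likewise for $M''$, and one then intersects the two $\Qbb$-subgroups --- runs into the failure of reductivity of such an intersection, and is repaired precisely by passing to the Mumford--Tate group of a point, so it reduces to the same argument.
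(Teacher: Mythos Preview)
Your argument is correct and takes a genuinely different route from the paper. You work via Mumford--Tate groups: the smallest Shimura subvariety $Z_P$ through each $P\in M'\cap M''$ lies in both $M'$ and $M''$, and the generic Mumford--Tate group on each irreducible component $W_i$ then forces $W_i$ to equal its own special closure; this even yields the slightly sharper statement that every irreducible component of the intersection is itself a Shimura subvariety. The paper instead argues group-theoretically: after arranging that $X'^+\cap X''^+$ contains some $x$, it takes $\Hbf=(\Gbf'\cap\Gbf'')^\circ$ and proves $\Hbf$ reductive by invoking \cite[Lemma\,5.1]{eskin mozes shah} (any connected $\Qbb$-subgroup of $\Gbf$ containing $x(\Sbb)$ is reductive, since the centralizer of $x(\Sbb)$ in $\Gbf_\Rbb$ is compact modulo the center), strips the compact $\Qbb$-factors to obtain $\Hbf'\subset\Hbf$, checks that $(\Hbf',\Hbf'(\Rbb)x)$ is a subdatum, and finishes with the Ullmo--Yafaev lemma that a fixed reductive $\Qbb$-subgroup supports only finitely many subdata. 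So your closing remark is slightly off: the purely group-theoretic variant does \emph{not} founder on reductivity of $\Gbf'\cap\Gbf''$, precisely because the Eskin--Mozes--Shah lemma supplies it. What your route buys is independence from both that lemma and the Ullmo--Yafaev finiteness input, and a direct identification of the components; what the paper's route buys is an explicit uniform $\Qbb$-group $\Hbf'$, depending only on $\Gbf'$ and $\Gbf''$, that governs the entire intersection.
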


\begin{proof}
Write  $\wp:X^+\ra M$ for the uniformization map. Let $M'$ and $M''$ be defined by connected subdata $(\Gbf',X';X'^+)$ and $(\Gbf'',X'';X''^+)$ respectively. Then the non-empty  intersection $\wp(X'^+)\cap\wp(X''^+)$  implies that the intersection
$$\left(\bigcup_{\gamma\in\Gamma}\gamma X'^+\right)\bigcap\left(\bigcup_{\gamma\in\Gamma}\gamma X''^+\right)$$
is non-empty, and we can find $\gamma\in\Gamma$ such that $X'^+\cap\gamma X''^+\neq\emptyset$. Since $(\Gbf'',X'';X''^+)$ and $(\gamma\Gbf''\gamma^\inv,\gamma X'';\gamma X''^+)$ defines the same Shimura subvariety $M''$, we may assume for simplicity that $X'^+\cap X''^+\neq\emptyset$.

We thus take $x\in X'^+\cap X''^+$. Then the homomorphism $x:\Sbb\ra\Gbf_\Rbb$ factors through $\Hbf_\Rbb$, with $\Hbf$ being the neutral component of the intersection $\Gbf'\cap\Gbf''$. We claim that:\begin{enumerate}
\item[(a)] $\Hbf$ is a reductive $\Qbb$-subgroup of $\Gbf$;

\item[(b)] $\Hbf$ contains a maximal connected reductive $\Qbb$-subgroup $\Hbf'$ such that $\Hbf'^\ad$ admits no compact $\Qbb$-factors, and  $x:\Sbb\ra\Gbf_\Rbb$ factors through $\Hbf'_\Rbb$;

\item[(c)] Write $\Hbf'(\Rbb)x$ for the orbit of $x$ under $\Hbf'(\Rbb)$, then the pair $(\Hbf',\Hbf'(\Rbb)x)$ is a Shimura subdatum of $(\Gbf,X)$, and clearly $(\Hbf',\Hbf'(\Rbb)x;\Hbf'(\Rbb)^+x)$ is a connected Shimura subdatum of $(\Gbf,X;X^+)$.
\end{enumerate}
In particular, (c) implies that if $\wp(x)$ is a point in $M'\cap M''$ then it is a point in the Shimura subvariety defined by some $\Qbb$-group $\Hbf'$ uniquely determined by $\Gbf'$ and $\Gbf''$.
Hence the lemma would follow from \cite[Lemma\,3.7]{ullmo yafaev}, which asserts that for a given reductive $\Qbb$-subgroup $\Hbf'$ of $\Gbf$, there are at most finitely many Shimura subdata of the form $(\Hbf',Y)$ in $(\Gbf,X)$, and similarly for connected Shimura subdata of the form $(\Hbf',Y;Y^+)$ of $(\Gbf,X;X^+)$.

The proofs of the claims (a)-(c) are as follows:

(a) $\Hbf$ is a $\Qbb$-subgroup of $\Gbf$ such that $x(\Sbb)\subset\Hbf_\Rbb$. In particular, the centralizer of $x(\Sbb)$ in $\Gbf_\Rbb$ is compact modulo the center of $\Gbf_\Rbb$.
Applying \cite[Lemma\,5.1]{eskin mozes shah} we see that $\Hbf_\Rbb$ is reductive, hence $\Hbf$ is reductive.

(b) $\Hbf$ admits an almost direct product $\Hbf=\Hbf_0\Hbf_1\Hbf_2$ where $\Hbf_1$ is generated by non-compact $\Qbb$-simple normal semi-simple $\Qbb$-subgroups of $\Hbf$,  $\Hbf_2$ is generated by the compact ones, and $\Hbf_0$ is the connected center. Note that $\Hbf_1\Hbf_2$ equals $\Hbf^\der$, and we put $\Hbf':=\Hbf_0\Hbf_1$. To show that $x(\Sbb)$ is contained in $\Hbf'_\Rbb$, it suffices to show that the intersection $x(\Sbb)\cap\Hbf_{2,\Rbb}$ is zero-dimensional. But the inclusion $x(\Sbb)\subset\Hbf_\Rbb$ implies that the conjugation by $x(\sqrt{-1})$ induces a Cartan involution on $\Hbf^\der_\Rbb=\Hbf_{1,\Rbb}\Hbf_{2,\Rbb}$, which fixes the compact part $\Hbf_{2,\Rbb}$, hence $\Hbf_{2,\Rbb}$ is centralized by $x(\Sbb)$, which is essentially the same arguments used in \cite{ullmo yafaev} (right before Lemma 3.6).

(c) The connected reductive $\Qbb$-group $\Hbf'$ of $\Gbf$ admits no compact semi-simple $\Qbb$-factors. The inclusion $x(\Sbb)\subset\Hbf'_\Rbb$ implies that $\Lie\Hbf'$ is a rational Hodge substructure of $\Lie\Gbf$ as $\Lie\Hbf'_\Rbb$ is stabilized by the adjoint action of $x(\Sbb)$ on $\Lie\Gbf_\Rbb$. Hence the condition on Hodge types and on Cartan involution are both satisfied, and we get a Shimura subdatum $(\Hbf',Y)$ with $Y=\Hbf'(\Rbb)x$ being the orbit of $x$ under $\Hbf'(\Rbb)$ inside $X$. We further have a connected subdatum $(\Hbf',Y;Y^+)$, with $Y^+=\Hbf'(\Rbb)^+x$ the connected component of $Y$ containing $x$, and the Shimura subvariety it defines is contained in $M'$ and $M''$ passing through $\wp(x)$.\end{proof}






\section{Proof of the main result}\label{sec-proof}
As we have explained in \autoref{sec-introduction} we prove the main result by induction on the dimension of a given Shimura subvariety $M$ contained generically $\THcal_g$.
The bound $g>7$ comes from the following theorem proved in \cite[Theorem\,E]{lu-zuo-14}:

\begin{theorem}[Lu-Zuo]\label{Lv-Zuo} For $g>7$, the hyperelliptic Torelli locus $\THcal_g$ does not contain generically any totally geodesic curves of $\Acal_g$.
\end{theorem}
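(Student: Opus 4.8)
\medskip
\noindent This is \cite[Theorem\,E]{lu-zuo-14}; we describe the strategy of its proof. Suppose $g>7$ and, for contradiction, that $C\subset\Acal_g$ is a totally geodesic curve with $C\subset\THcal_g$ and $C\cap\THcal_g^\circ\neq\emptyset$. The plan is to attach to $C$ a family of curves over a base curve, read a rigid (``maximal Higgs'') equality off total geodesity, confront it with the constraints coming from the hyperelliptic structure, and conclude by a numerical inequality that fails for $g>7$. After a finite base change, pulling back the universal family over $C$ and performing semistable reduction, one obtains a non-isotrivial semistable family $\bar f\colon\ols\ra\olb$ over a smooth projective curve $\olb$ of genus $\wt g$, smooth precisely over $B=\olb\setminus\Delta$ with $\Delta$ of cardinality $s$, whose general fibre is a smooth hyperelliptic curve of genus $g$, and with $B\ra\Acal_g$ generically finite onto an open part of $C$. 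Since $\deg\bar f_*\osb>0$ by non-isotriviality while $\deg\bar f_*\osb\le\tfrac g2(2\wt g-2+s)$ by the Arakelov inequality, one has $2\wt g-2+s>0$ (which also reflects that the affine variety $\THcal_g^\circ$ contains no complete curve).

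The first step uses \emph{total geodesity}. The weight-one variation of Hodge structure $R^1\bar f_*\Qbb$ over $B$ carries the logarithmic Higgs bundle $\bigl(\bar f_*\osb\oplus R^1\bar f_*\calo_{\ols},\,\theta\bigr)$ with $\theta\colon\bar f_*\osb\ra R^1\bar f_*\calo_{\ols}\otimes\Omega^1_{\olb}(\log\Delta)$. A totally geodesic curve in $\Acal_g$ has, over $B$, a real VHS that is a direct sum of $k$ copies of the uniformizing (``elliptic'') weight-one VHS and a unitary summand, for some $1\le k\le g$; equivalently, up to isogeny $\Jac(\ols_b)\sim E_b^{\,k}\times A_0$ with $E_b$ a varying elliptic curve and $A_0$ a fixed abelian variety. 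Accordingly $\bar f_*\osb$ contains a rank-$k$ subbundle $\calf$ carrying a maximal Higgs field, i.e.\ $\theta$ induces an isomorphism $\calf\isom\calf^\vee\otimes\Omega^1_{\olb}(\log\Delta)$. Taking determinants forces
\[
2\deg\calf=k\,(2\wt g-2+s),\qquad\text{while}\qquad\deg\bigl(\bar f_*\osb\bigr)=\deg\calf
\]
since the complementary unitary summand has degree $0$.

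The second step brings in the \emph{hyperelliptic structure} and shows it is incompatible with this for $g>7$. The relative hyperelliptic involution extends to an involution of $\ols$ over $\olb$; its quotient, after normalization and minimal resolution, is a $\bbp^1$-bundle $\rho\colon P\ra\olb$ together with a flat double cover $\pi\colon\ols\ra P$ branched along $R\in\bigl|\calo_P(2)\otimes\rho^*\caln\bigr|$ for some $\caln\in\Pic(\olb)$, the vertical components of $R$ recording the singular fibres of $\bar f$. The projection formula equips $\bar f_*\osb$ with a filtration whose graded pieces are the $\rho_*$ of line bundles of relative fibre degrees $0,1,\dots,g-1$, with respect to which $\theta$ is (up to the horizontal part of $R$) multiplication by the branch section, hence shifts the filtration by one step. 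One then combines the double-cover formulas for $\deg\bar f_*\osb$, $\chi(\calo_{\ols})$ and $K^2_{\ols/\olb}$; the relative Noether relation $12\deg\bar f_*\osb=K^2_{\ols/\olb}+\delta$ with $\delta\ge s$ the number of nodes in the fibres; Xiao's slope inequality $K^2_{\ols/\olb}\ge\tfrac{4(g-1)}{g}\deg\bar f_*\osb$, refined for hyperelliptic fibrations to an identity with non-negative fibre contributions; and Xiao's structural analysis of hyperelliptic fibrations (building on Cornalba--Harris). Together these force the maximal-Higgs subbundle $\calf$ to sit in a very restricted way inside the above filtration, from which one extracts an inequality among $g,\wt g,s,k,\delta$ and the self-intersections of the horizontal part of $R$. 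Eliminating the auxiliary quantities against $2\deg\calf=k(2\wt g-2+s)$ leaves a single numerical inequality in $g$; one checks that it fails precisely when $g>7$, whereas for $g\le7$ it holds, in agreement with the examples in $\THcal_3$ and beyond of \cite{grushevsky-moller-13,moonen-10,lu-zuo-14}.

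The main obstacle is this second step. The hyperelliptic condition alone does \emph{not} sharpen the Arakelov bound --- the Shimura curves in $\THcal_3$ saturate it --- so a crude slope comparison is useless; one must keep precise track of how the vertical components of the branch divisor, i.e.\ the singular fibres (which by total geodesity are themselves further constrained by the maximal Higgs field), contribute to $\deg\bar f_*\osb$ and $K^2_{\ols/\olb}$, since it is exactly the interaction of this degeneration bookkeeping with Xiao's hyperelliptic slope formula and with the maximality of $\calf$ that produces the sharp threshold $g\le7$ rather than a weaker bound.
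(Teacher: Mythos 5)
The paper itself gives no proof of this statement: it is quoted directly as \cite[Theorem\,E]{lu-zuo-14}, and the present note simply uses it as a black box to anchor the induction and supply the threshold $g>7$. You correctly identify the source and explicitly frame your text as a description of the strategy in the cited paper, so there is nothing in the note to compare your argument against. Your outline of that strategy (semistable reduction of a family over the totally geodesic base, Arakelov inequality, the maximal-Higgs splitting $\bar f_*\osb=\calf\oplus(\text{unitary})$ with $\theta\colon\calf\xrightarrow{\sim}\calf^\vee\otimes\Omega^1_{\olb}(\log\Delta)$ giving $2\deg\calf=k(2\wt g-2+s)$, the hyperelliptic double-cover geometry over a $\bbp^1$-bundle, Xiao-type slope analysis, and numerical elimination) is broadly consistent with the known Viehweg--Zuo-style approach in \cite{lu-zuo-14}.

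One claim in your sketch overstates what the maximal-Higgs decomposition gives: ``equivalently, up to isogeny $\Jac(\ols_b)\sim E_b^{\,k}\times A_0$ with $E_b$ a varying elliptic curve'' is not equivalent to the rank-$k$ maximal-Higgs subbundle statement, and in general it is false. For a totally geodesic or Shimura curve attached to a non-split quaternion algebra, the non-unitary part of the weight-one VHS has the form $\Lbb\otimes U$ with $\Lbb$ of rank $2$ maximal Higgs and $U$ unitary, and the fibres are abelian varieties with quaternionic multiplication; there is no algebraic family of elliptic curves $E_b$ over $B$ such that $\Jac(\ols_b)\sim E_b^{\,k}\times A_0$. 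The correct and sufficient statement is purely at the level of Higgs bundles, which is what \cite{lu-zuo-14} actually uses, so this slip does not affect the validity of the overall argument you describe, but it is an inaccuracy worth not propagating.
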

Here   totally geodesic subvarieties (including the one-dimensional case, namely totally geodesic curves) are closed algebraic subvarieties in $\Acal_g$ which are totally geodesic for the K\"ahler structure. Shimura subvarieties are always totally geodesic. See \cite{lu-zuo-14} for further details.

 We start with the following property on non-simple Shimura data.

\begin{lemma}[non-simple Shimura data]\label{non-simple Shimura data} Let $(\Gbf,X;X^+)$ be a connected Shimura datum. Assume the associated datum of adjoint type $(\Gbf^\ad,X^\ad;X^+)$ admits a decomposition $(\Gbf_1,X_1;X_1^+)\times(\Gbf_2,X_2;X_2^+)$ with $\Gbf_i$ non-trivial semi-simple $\Qbb$-groups of adjoint type. Then there exists a subdatum $(\Gbf',X';X'^+)$ of $(\Gbf,X;X^+)$, whose image under $(\Gbf,X;X^+)\ra(\Gbf^\ad,X^\ad;X^+)$ is of the form $(\Gbf_1,X_1;X_1^+)\times(\Tbf_2,x_2)$ where $(\Tbf_2,x_2)\subset(\Gbf_2,X_2;X_2^+)$ is a subdatum of CM type, i.e., $\Tbf_2$ is a $\Qbb$-torus in $\Gbf_2$ and $x_2$ is a point in $X_2 ^+$.
\end{lemma}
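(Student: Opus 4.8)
The plan is to produce the desired subdatum by a pullback construction: pick a CM point $x_2$ in $X_2^+$, obtain a $\Qbb$-torus $\Tbf_2\subset\Gbf_2$ with $x_2$ factoring through $\Tbf_{2,\Rbb}$, and then take $\Gbf'$ to be (the derived-plus-center version of) the preimage of $\Gbf_1\times\Tbf_2$ under the central isogeny $\Gbf\twoheadrightarrow\Gbf^\ad$. First I would recall that $X_2^+$, being a Hermitian symmetric domain, contains CM points: by the general density/existence result for special points (e.g.\ via a maximal $\Qbb$-torus $\Tbf$ of $\Gbf_2$ whose real points contain a compact maximal torus, into which some $h\in X_2$ factors), there is $x_2\in X_2^+$ and a $\Qbb$-torus $\Tbf_2\subset\Gbf_2$ with $x_2\in\Hom_{\Rbb\text{-}\Gr}(\Sbb,\Tbf_{2,\Rbb})$. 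Then $(\Tbf_2,x_2)$ is automatically a subdatum of CM type inside $(\Gbf_2,X_2;X_2^+)$: axioms SD2 and SD3 are vacuous/automatic for a torus. Consequently $(\Gbf_1,X_1;X_1^+)\times(\Tbf_2,x_2)$ is a connected subdatum of $(\Gbf^\ad,X^\ad;X^+)$, with underlying group $\Gbf_1\times\Tbf_2\subset\Gbf_1\times\Gbf_2=\Gbf^\ad$.

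Next I would pull this subdatum back along $\pi\colon\Gbf\to\Gbf^\ad$. Set $\Hbf:=\pi^{-1}(\Gbf_1\times\Tbf_2)$, a $\Qbb$-subgroup of $\Gbf$ containing the central kernel $Z:=\ker\pi$. Fix any point $x\in X^+$ whose image $\bar x\in X^+$ (under $X^+\hookrightarrow X^{\ad,+}$, identifying components) lies in $X_1^+\times\{x_2\}$ — such $x$ exists because $X^+\to X^{\ad,+}$ is surjective (it is the quotient by the action of the center, which acts trivially). Then $\Ad\circ x=\bar x$ factors through $(\Gbf_1\times\Tbf_2)_\Rbb$, so $x(\Sbb)$ normalizes and in fact is contained in $\Hbf_\Rbb$ up to the center: more precisely $x(\Sbb)\subset\Hbf_\Rbb$ since $\pi\circ x=\bar x$ has image in $(\Gbf_1\times\Tbf_2)_\Rbb$. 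Now $\Hbf$ need not itself satisfy the Shimura axioms on the nose (its adjoint group $\Gbf_1$ is fine, but one must check $\Hbf$ is reductive and extract the right reductive piece carrying $x$), so I would instead run the same argument as in the proof of \autoref{intersection of Shimura subvarieties}: $x(\Sbb)\subset\Hbf_\Rbb$ forces the centralizer of $x(\Sbb)$ to be compact modulo center, hence $\Hbf_\Rbb$ is reductive by \cite[Lemma\,5.1]{eskin mozes shah}; then peel off compact semisimple $\Qbb$-factors to get $\Hbf'\subseteq\Hbf$ with $x(\Sbb)\subset\Hbf'_\Rbb$ and $\Hbf'^\ad$ without compact $\Qbb$-factors; and $(\Hbf',\Hbf'(\Rbb)x;\Hbf'(\Rbb)^+x)$ is a connected subdatum of $(\Gbf,X;X^+)$. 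Finally I would check its image in $\Gbf^\ad$ is exactly $(\Gbf_1,X_1;X_1^+)\times(\Tbf_2,x_2)$: the image group is $\pi(\Hbf')$, which is contained in $\Gbf_1\times\Tbf_2$ and contains $\pi(x(\Sbb))=\bar x(\Sbb)$, whose $\Gbf_1$-component generates $\Gbf_1$ (because $\bar x$'s first component lies in $X_1^+$ and $\Gbf_1$ is $\Qbb$-simple adjoint — no proper normal subgroup contains a generic enough $\bar x_1$), while the $\Gbf_2$-component lands in $\Tbf_2$; since $\Hbf'$ is reductive its image is reductive, and semisimplicity/minimality considerations pin it down to $\Gbf_1\times\Tbf_2'$ with $\Tbf_2'\subseteq\Tbf_2$ a subtorus through which $x_2$ still factors, which after possibly enlarging $\Tbf_2$ to this $\Tbf_2'$ (or simply renaming) gives the claim.

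The main obstacle I expect is the bookkeeping in the last step: ensuring that the image of $\Hbf'$ in $\Gbf^\ad$ has a full $\Gbf_1$-factor and not a smaller semisimple subgroup. The $\Gbf_2$-side is easy because $\Tbf_2$ is already a torus and everything inside it is a torus. For the $\Gbf_1$-side one uses that $x$ was chosen so that its adjoint first component is a \emph{Hodge-generic} point of $X_1^+$ — i.e.\ its Mumford–Tate group (inside $\Gbf_1$) is all of $\Gbf_1$ — which is possible since Hodge-generic points are dense; then any $\Qbb$-subgroup of $\Gbf_1$ through which $\bar x_1$ factors, in particular $\pi(\Hbf')\cap(\Gbf_1\times\{1\})$ projected to $\Gbf_1$, must contain $\mathrm{MT}(\bar x_1)=\Gbf_1$. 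This is precisely the point where one must be slightly careful, because a priori $\pi(\Hbf')$ could be the graph of a homomorphism $\Gbf_1\to\Tbf_2$ rather than a product; but $\Gbf_1$ being semisimple admits no nontrivial homomorphism to the torus $\Tbf_2$, so $\pi(\Hbf')=\Gbf_1\times\Tbf_2'$ genuinely splits. With $\Tbf_2:=\Tbf_2'$ this finishes the proof.
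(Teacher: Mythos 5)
Your overall strategy coincides with the paper's: pick a CM subdatum $(\Tbf_2,x_2)\subset(\Gbf_2,X_2;X_2^+)$, form the preimage $\Hbf=\pi^{-1}(\Gbf_1\times\Tbf_2)$ under $\pi\colon\Gbf\to\Gbf^\ad$, and promote it to a Shimura subdatum at a point $x\in X^+$ lying over $X_1^+\times\{x_2\}$. Where you diverge is in the verification. For reductivity you fall back on the Eskin--Mozes--Shah argument of \autoref{intersection of Shimura subvarieties}; the paper instead notes directly that $\ker(\Hbf\to\Gbf^\ad)$ is the (multiplicative-type) center of $\Gbf$, so $\Hbf$ sits in an extension of the reductive group $\Gbf_1\times\Tbf_2$ by a central diagonalizable group and is therefore reductive for free. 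More substantively, your steps of ``peeling off compact $\Qbb$-factors'' and then arguing that the image in $\Gbf^\ad$ is still all of $\Gbf_1\times\Tbf_2$ via a Hodge-generic choice of $x_1$ are both unnecessary: $\Hbf^\ad\cong\Gbf_1$ already has no compact $\Qbb$-factors, since $\Gbf_1$ is a direct $\Qbb$-factor of $\Gbf^\ad$ for the Shimura datum $(\Gbf,X)$ and SD1 excludes compact factors; and because $\Hbf$ is literally the preimage of $\Gbf_1\times\Tbf_2$ and $\pi$ is surjective, $\pi(\Hbf)=\Gbb_1\times\Tbf_2$, and the neutral component $\Gbf'=\Hbf^\circ$ still surjects onto the connected group $\Gbf_1\times\Tbf_2$. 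So the $\Gbf_1$-factor can never shrink, no Hodge-genericity of $x_1$ is needed, and there is no subtorus $\Tbf_2'\subsetneq\Tbf_2$ to rename. Your proof would compile, but these detours address difficulties that the preimage construction already rules out; the paper's version is shorter precisely because it exploits that $\Hbf$ is a full preimage.
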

\begin{proof}
Take any CM subdatum  $(\Tbf_2,x_2)$ of $(\Gbf_2,X_2;X_2^+)$. The pre-image of $\Gbf_1\times\Tbf_2$ under $\Gbf\ra\Gbf^\ad\isom\Gbf_1\times\Gbf_2$ is a $\Qbb$-subgroup $\Hbf$ of $\Gbf$, which is mapped onto $\Gbf_1\times\Tbf_2$. The kernel of $\Hbf\ra\Gbf\ra\Gbf^\ad$ is central in $\Gbf$, whose connected component is a $\Qbb$-subtorus of the center of $\Gbf$. Hence $\Hbf$ is reductive. Write $\Gbf'$ for the neutral component of $\Hbf$, and take $x=(x_1,x_2)\in X^+\isom X_1^+\times X_2^+$ for some $x_1\in X_1^+$. Viewing $x$ as a point in $X^+$ for the datum $(\Gbf,X;X^+)$, we see that $x(\Sbb)\subset\Hbf_\Rbb$ because when viewing $x_i$ as a point of $X_i^+$ of the datum $(\Gbf_i,X_i;X_i^+)$ we have $x_1(\Sbb)\subset\Gbf_{1,\Rbb}$ and $x_2(\Sbb)\subset\Tbf_{2,\Rbb}$. In particular $x(\Sbb)$ is contained in $\Gbf'_\Rbb$ the neutral component of $\Hbf_\Rbb$.
The $\Qbb$-group $\Gbf'$ is a reductive $\Qbb$-subgroup of $\Gbf$, whose adjoint quotient is $\Gbf_1$, admitting no compact $\Qbb$-factors.

We claim that the pair $(\Gbf',X'=\Gbf'(\Rbb)x)$ is a Shimura subdatum of $(\Gbf,X)$: first of all the action of $\Sbb$ on $\Lie\Gbf'_\Rbb$ by the adjoint action coincides with the action of $\Sbb$ on $\Lie\Gbf_\Rbb$, which stabilizes $\Lie\Gbf'_\Rbb$ because $x(\Sbb)\subset\Gbf'_\Rbb$; the remaining conditions on Hodge types and Cartan involutions are valid for $x$, and clearly invariant when we conjugate $x$ by any element $g\in\Gbf'(\Rbb)$.

Take $X'^+$ to be the connected component of $X'$ containing $x$, we get a connected Shimura subdatum $(\Gbf',X';X'^+)$, whose image in $(\Gbf_1,X_1;X_1^+)\times(\Gbf_2,X_2;X_2^+)$ is clearly $(\Gbf_1,X_1;X_1^+)\times(\Tbf_2,x_2)$.
\end{proof}

We also need the notion of Hecke translation of Shimura subvarieties. Here it suffices to use the following version (cf. \cite[Definition\,2.1.9]{chen-lu-zuo}):

\begin{definition}[Hecke translation]\label{Hecke translation}
Let $M=\Gamma\bsh X^+$ be a connected Shimura variety defined by a connected Shimura datum $(\Gbf,X;X^+)$
and some arithmetic subgroup $\Gamma\subset\Gbf^\der(\Rbb)^+$, with $\wp=\wp_\Gamma:X^+\ra M$ the uniformization map.
Let $M'\subset M$ be the Shimura subvariety $\pi(X'^+)$ defined by some connected subdatum $(\Gbf',X';X'^+)$.
For $q\in\Gbf(\Qbb)^+$, the Hecke translation of $M'$ by $q$ is defined  as $\wp(qX'^+)$,
which is the Shimura subvariety associated to $(q\Gbf'q^\inv,qX';qX'^+)$.

We clearly have the equality $\wp(qX'^+)=\wp(q'X'^+)$   as Shimura subvarieties of $M$ whenever $q=\gamma q'$ for some element $\gamma\in\Gamma$.

The union $\bigcup_{q\in\Gbf(\Qbb)^+}\wp(qX'^+)$ is referred to as the Hecke orbit of $M'$ in $M$.
\end{definition}

\begin{lemma}[density of Hecke orbits]\label{density of Hecke orbits}
Let $M$ be a Shimura subvariety of $\Acal_g$ defined by a connected subdatum $(\Gbf,X;X^+)$, which is contained generically in the hyperelliptic Torelli locus $\THcal_g$.

(1) Assume that $M$ contains a proper Shimura subvariety $M'\subsetneq M$ of dimension $>0$ defined by some subdatum $(\Gbf',X';X'^+)$. Then there exists a Hecke translate $M''=\wp(qX'^+)$ of $M'$ in $M$ contained generically in $\THcal_g$.

(2) It suffices to prove the main theorem for $M$ such that $\Gbf^\der$ is $\Qbb$-simple. 
\end{lemma}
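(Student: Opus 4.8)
The plan is to deduce (1) from the density of Hecke orbits in a Shimura variety, and then obtain (2) from (1) by descending induction on $\dim M$, using \autoref{non-simple Shimura data}. Throughout, write $\wp\colon X^+\to M$ for the uniformization map.

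\emph{Part (1).} I would first note that for every $q\in\Gbf(\Qbb)^+$ the Hecke translate $M''=\wp(qX'^+)$ is again a Shimura subvariety \emph{lying inside} $M$, hence inside $\THcal_g$: indeed $q\in\Gbf(\Rbb)^+$ preserves the orbit $X^+$, so $qX'^+\subset X^+$ and $M''=\wp(qX'^+)\subset\wp(X^+)=M\subset\THcal_g$; moreover $\dim M''=\dim M'>0$. So it only remains to choose $q$ so that $M''$ meets $\THcal_g^\circ$. Since $M$ is contained generically in $\THcal_g$ and $\THcal_g^\circ$ is open in $\THcal_g$, the set $M\cap\THcal_g^\circ$ is a non-empty open subset of $M$; it therefore suffices to show that the Hecke orbit $\bigcup_{q\in\Gbf(\Qbb)^+}\wp(qX'^+)$ is dense in $M$. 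To see this, fix $x_0\in X'^+$. As $\Gbf(\Rbb)^+$ acts transitively on $X^+$, the orbit map $g\mapsto gx_0$ is a continuous surjection $\Gbf(\Rbb)^+\to X^+$; by real approximation for the connected $\Qbb$-group $\Gbf$, the group $\Gbf(\Qbb)$ is dense in $\Gbf(\Rbb)$, hence $\Gbf(\Qbb)^+=\Gbf(\Qbb)\cap\Gbf(\Rbb)^+$ is dense in the open subgroup $\Gbf(\Rbb)^+$, and consequently $\{gx_0:g\in\Gbf(\Qbb)^+\}$ is dense in $X^+$. Pushing forward along the continuous surjection $\wp$ shows that $\bigcup_{q}\wp(qX'^+)$ contains the dense subset $\wp(\{gx_0:g\in\Gbf(\Qbb)^+\})$, so it is dense in $M$, which finishes (1).

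\emph{Part (2).} I would argue by contradiction: assume the main theorem fails, and among all pairs $(g,M)$ with $g>7$ and $M\subset\THcal_g$ a positive-dimensional Shimura subvariety contained generically in $\THcal_g$, choose one with $\dim M$ minimal. Suppose $\Gbf^\der$ is not $\Qbb$-simple. Then $\Gbf^\ad$ splits as a product $\Gbf_1\times\Gbf_2$ of two non-trivial semi-simple $\Qbb$-groups of adjoint type, inducing $X^+\isom X_1^+\times X_2^+$ with $\dim X_i^+>0$ for $i=1,2$ (each $\Gbf_i$ being of Hermitian type). By \autoref{non-simple Shimura data} there is a connected subdatum $(\Gbf',X';X'^+)\subset(\Gbf,X;X^+)$ whose image in $\Gbf^\ad$ is $(\Gbf_1,X_1;X_1^+)\times(\Tbf_2,x_2)$ with $\Tbf_2$ a $\Qbb$-torus; thus $\dim X'^+=\dim X_1^+$, and the Shimura subvariety $M'=\wp(X'^+)$ satisfies $0<\dim M'=\dim X_1^+<\dim X_1^++\dim X_2^+=\dim M$, so $M'\subsetneq M$ is a proper positive-dimensional Shimura subvariety of $M$. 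Applying part (1) produces a Hecke translate $M''$ of $M'$ in $M$ which is again contained generically in $\THcal_g$ and satisfies $0<\dim M''=\dim M'<\dim M$, contradicting the minimality of $\dim M$. Hence a minimal counterexample has $\Gbf^\der$ $\Qbb$-simple, so establishing \autoref{hyperelliptic Oort conjecture} for Shimura subvarieties with $\Gbf^\der$ $\Qbb$-simple proves it in general.

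The only non-formal input is the density of the Hecke orbit, i.e.\ real approximation for the connected $\Qbb$-group $\Gbf$; the rest is bookkeeping with subdata and their conjugates under $\Gbf(\Qbb)^+$. The single point that demands care is that the dimension must drop \emph{strictly} when one passes from $M$ to $M'$ in the non-simple case: this is precisely why \autoref{non-simple Shimura data} replaces the second adjoint factor by a CM point rather than keeping it, and it is guaranteed because the discarded factor $X_2^+$ is a positive-dimensional Hermitian symmetric domain.
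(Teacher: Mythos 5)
Your proof is correct and takes essentially the same route as the paper: part (1) is the same real-approximation/density argument, and your minimal-counterexample formulation of part (2) is just an explicit rendering of the dimension induction that the paper invokes (via \autoref{non-simple Shimura data} together with part (1)). Your added care in checking that $0<\dim X'^+<\dim X^+$, using that both adjoint factors are non-compact and hence give positive-dimensional Hermitian domains, matches what the paper's \autoref{non-simple Shimura data} delivers.
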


\begin{proof}(1) This is essentially the same as \cite[Lemma\,2.1.10]{chen-lu-zuo}. More precisely, the real approximation for linear $\Qbb$-groups implies the density of $\Gbf(\Qbb)^+$ in $\Gbf(\Rbb)^+$, hence the Hecke orbit $\bigcup_{q\in\Gbf(\Qbb)^+}\wp(qX'^+)$ is dense in $M$. Since $M\subset\THcal_g$ meets $\THcal_g^\circ$ non-trivially, there exists some $q\in\Gbf(\Qbb)^+$ such that $\wp(qX'^+)\subset M$ is contained generically in $\THcal_g^\circ$.

(2) Assume that $M$ is defined by some subdatum $(\Gbf,X;X^+)$ such that $\Gbf^\ad$ is NOT $\Qbb$-simple. Then by \autoref{non-simple Shimura data} $(\Gbf,X;X^+)$ contains some subdatum $(\Gbf',X';X'^+)$ with $0<\dim X'^+<\dim X^+$.
It defines a Shimura subvariety $M'\subset M$ of dimension $>0$.
If $M$ is not contained generically in $\THcal_g$, then after passing to a suitable Hecke translate we may assume that $M'$ is contained generically in $\THcal_g$, and to prove the main theorem it suffices to exclude the generic inclusion of $M'$ in $\THcal_g$.
\end{proof}

The affineness of $\THcal_g^\circ$ plays an essential role in the proof of the main result, similar to its usage in \cite{dejong-zhang-07}. 

\begin{theorem}\label{thm-affine-hyper}
The open hyperelliptic locus $\THcal_g^\circ$ is affine.
\end{theorem}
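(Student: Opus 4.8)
The plan is to prove that $\THcal_g^\circ$ is affine by identifying it, via the Torelli morphism, with the moduli scheme $\Hcal_g$ of smooth hyperelliptic curves of genus $g$, and then showing that $\Hcal_g$ is affine because it is, up to a finite morphism, a finite quotient of the configuration space $M_{0,2g+2}$ of distinct ordered points on $\Pbb^1$ — a well-known affine variety.

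First I would establish the affineness of $\Hcal_g$. A smooth hyperelliptic curve of genus $g$ is the double cover of $\Pbb^1$ branched along an unordered set of $2g+2$ distinct points, and for $g\ge 2$ the hyperelliptic pencil is unique, so every isomorphism of such curves descends to an automorphism of $\Pbb^1$ carrying branch points to branch points. Hence the branch divisor modulo $\mathrm{PGL}_2$ is a complete isomorphism invariant, and the coarse moduli space of smooth hyperelliptic curves is isomorphic to the quotient variety $M_{0,2g+2}/S_{2g+2}$. Now $M_{0,n}$ — the space of $n$ distinct ordered points of $\Pbb^1$ modulo $\mathrm{PGL}_2$ — is affine: normalizing three of the points to $0,1,\infty$ presents it as the complement of a hyperplane arrangement in $\mathbb{A}^{n-3}$. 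A finite quotient of an affine variety is again affine, so the coarse hyperelliptic moduli space is affine; and the fine moduli scheme $\Hcal_g$, which is finite over it (the fibers being the finitely many level-$N$ structures on a given curve up to automorphism), is affine as well, since a scheme finite over an affine scheme is affine. Equivalently, this coarse space is the geometric quotient by $\mathrm{PGL}_2$ of the affine variety $\Pbb^{2g+2}\setminus\{\mathrm{disc}=0\}$ — the complement of an ample divisor in projective space — all of whose points are stable, having finite stabilizer and closed $\mathrm{PGL}_2$-orbit.

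Next I would show that $j^\circ|_{\Hcal_g}\colon\Hcal_g\to\Acal_g$ is a locally closed immersion; since $\THcal_g^\circ=j^\circ(\Hcal_g)$ by definition, this gives $\THcal_g^\circ\cong\Hcal_g$, which is affine by the previous step. Injectivity of $j^\circ|_{\Hcal_g}$ follows from the Torelli theorem together with the fact that, for a hyperelliptic curve $C$, the automorphism $-1$ of the polarized Jacobian $(J_C,\theta_C)$ is induced by the hyperelliptic involution $\iota$ (as $\iota_*=-1$ on $J_C$), so that $\Aut(J_C,\theta_C)$ is precisely the image of $\Aut(C)$ and no extra identifications occur. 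That $j^\circ|_{\Hcal_g}$ is unramified reduces to the surjectivity of its codifferential at each point $[C]$, which is the multiplication map $\mathrm{Sym}^2 H^0(C,\omega_C)\to H^0(C,\omega_C^{\otimes 2})^{\iota}$; identifying $H^0(\omega_C)$ and $H^0(\omega_C^{\otimes 2})^{\iota}$ with the pullbacks of $H^0(\Pbb^1,\Ocal(g-1))$ and $H^0(\Pbb^1,\Ocal(2g-2))$ turns this into the surjective multiplication map of binary forms of degree $g-1$. Alternatively, one avoids the tangent-space computation altogether: $\Hcal_g\to\THcal_g^\circ$ is in any case a finite surjective morphism with affine source, whence $\THcal_g^\circ$ is affine by Chevalley's theorem on affineness.

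I expect the main obstacle to lie in the first step: making precise the identification of the coarse hyperelliptic moduli space with a finite quotient of $M_{0,2g+2}$ (equivalently, with the geometric $\mathrm{PGL}_2$-quotient of the discriminant complement) — in particular, verifying that configurations of $2g+2$ distinct points have finite stabilizer and closed orbit, so that the quotient is geometric, and checking that no spurious finite cover or quotient is introduced when comparing coarse and fine moduli. The other ingredients — affineness of $M_{0,n}$ and of complements of ample divisors, stability of the affine property under finite morphisms and finite quotients, the Torelli theorem for hyperelliptic curves, and Chevalley's affineness criterion — are classical.
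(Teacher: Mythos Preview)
Your proposal is correct and follows essentially the same route as the paper: reduce, via the Torelli isomorphism $\THcal_g^\circ\cong\Hcal_g$, to the affineness of the coarse hyperelliptic moduli space $H_g$, exhibit $H_g$ as a finite-group quotient of an affine variety, and then lift along the finite map $\Hcal_g\to H_g$. The paper simply asserts the isomorphism and cites Cornalba--Harris for the affine presentation of $H_g$, whereas you supply the explicit model $H_g\cong M_{0,2g+2}/S_{2g+2}$ and a direct verification that $j^\circ|_{\Hcal_g}$ is an immersion (or bypass it via Chevalley); these are elaborations rather than a different strategy.
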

In fact it is clear that under the Torelli morphism $j^\circ$, $\THcal_g^\circ\subset\Acal_g$ is isomorphic to the hyperelliptic locus $\Hcal_g$ inside $\Mcal_g$, where a level-$N$ structure has been imposed  with $N\geq 3$.

\begin{proof}[Proof of \autoref{hyperelliptic Oort conjecture}]
By \autoref{density of Hecke orbits}, it suffices to exclude simple Shimura varieties $M$ contained generically in $\THcal_g$ for $g>7$.

The one-dimensional case, i.e., the case when $M$ a Shimura curve, is done in \autoref{Lv-Zuo}. We may thus assume that $\dim M\geq 2$. In particular, this implies that $\Gbf^\der$ is NOT isogeneous to $\SL_{2,\Qbb}$, and the closure $\Mbar$ of $M$ in the minimal compactification $\Acalbar_g:=\Acal_g^\BB$ admits no boundary components of codimension one. One may thus take a generic projective curve $C$ in $\Mbar$ which meets $\Mbar\bsh M$ trivially.

If the singular locus in $\THcal_g$, namely the intersection $\THcal_g^\sing:=\THcal_g\cap\Acal_g^\dec$,
also meets in $\Mbar$ in codimension at least 2, then we may further choose $C$  meeting $\THcal_g^\sing$ trivially.
But this would give a projective curve $C$ contained in the open Torelli locus $\THcal_g^\circ$,
which contradicts the affineness of $\THcal_g^\circ$ by \autoref{thm-affine-hyper}.
Hence at least one of the irreducible components in $\THcal_g^\sing\cap M=\Acal_g^\dec\cap M$
is of codimension one in $M$.

Since $\Acal_g^\dec$ is a finite union of Shimura subvarieties in $\Acal_g$ by \autoref{A-dec}, using \autoref{intersection of Shimura subvarieties} we see that $\Acal_g^\dec\cap M$ is a finite union of Shimura subvarieties in $M$, and one of them is of codimension one in $M$.
Repeating the Hecke translation arguments as in \autoref{density of Hecke orbits}, we get a Shimura subvariety $M'\subset M$ of dimension $>0$ contained generically in $\THcal_g$. We may then apply the induction on dimensions.
\end{proof}

\end{document}